\theoremstyle{plain}
\newtheorem{theorem}{Theorem}
\newtheorem{lemma}[theorem]{Lemma}
\theoremstyle{definition}
\newtheorem{definition}[theorem]{Definition}
\begin{document}
\title[An open problem on congruences of finite lattices]{An open problem\\on congruences of finite lattices\\
in pictures}
\author[G.\ Gr\"atzer]{George Gr\"atzer}
\email{gratzer@mac.com}
\urladdr{http://server.maths.umanitoba.ca/homepages/gratzer/}
\address{University of Manitoba}
\date{\today}

\begin{abstract} 
Let $L$ be a planar semimodular lattice. 
We call $L$ \emph{slim}, 
if it has no $\mthree$ sublattice.
Let us define an \emph{SPS lattice} as a slim, planar, semimodular lattice $L$. 

In 2016, I proved a property of congruences of SPS lattices (Two-cover Property) 
and raised the problem of characterizing them.

Since then, more than 50 papers have been published contributing to this problem.
In this survey, I provide an overview of this field with major contributions by 
G\'abor Cz\'edli.
\end{abstract}

\maketitle

\section{Introduction}\label{S:Introduction}

Paul Erd\H{o}s defined a mathematician as a machine, where the input is (espresso) 
coffee and the output is theorems and conjectures. 
He thought that raising problems was as important as proving theorems.

Here is an illustration.  My book  \cite{GLT},  \emph{General Lattice Theory}, was published in 1978.
A year after its publication, I~wrote an article (see \cite{gG80}), \emph{1979 problem update}.
The article has 21 references affecting 40 problems proposed in the book. 
This book and its second and third editions (see \cite{GLT2} and \cite{LTF}) have 850 references in the Mathematical Reviews,
and many of them are connected to open problems these books propose.

This survey focuses on a single open problem on
congruences of finite lattices; I~proposed it in a 2016 paper (see \cite{gG17a}),
as related in my book \cite{CFL3}.

\subsection{Notation}\label{S:Notation}
We use the notation as in my book \cite{CFL3}.
Part I of this book can be freely downloaded: \texttt{arXiv:2104.06539}

\subsection{Outline}\label{S:Outline}

Birkhoff duality helps in computing in large distributive lattices;
this topic is introduced in Section~\ref{S:distributive}.
Section~\ref{S:started} recalls how this field started. 
A~crucial result  of G. Gr\"atzer, H.~Lakser, and  E.\,T. Schmidt~\cite{GLS98a} 
is stated in Section~\ref{S:semimodular}.
Section~\ref{S:main} formulates the Problem.
The first known property of congruences of finite lattices, the Two-cover Property,
is given in Section~\ref{S:Two}.

There are three major tools required to tackle the Problem,
they are described in Section~\ref{S:Swing} Sections~\ref{S:Natural}, and \ref{S:Lamps}. 
Section~\ref{S:major} has the four major properties of G.~Cz\'edli.
The 3P3C property is discussed in Section~\ref{S:3P3C}.

To illustrate how the tools are used, Section~\ref{S:proofs} sketches some proofs.
Finally, Section~\ref{S:more} formulates a related problem.


\section{Finite distributive lattices}\label{S:distributive}

Small distributive lattices are easy to compute in. Larger ones, not so much.
\emph{Birkhoff duality} (see G. Birkhoff~\cite{gB37}) provides a solution.

For a finite ordered set $P$, call $A \ci P$ a \emph{down set} if{}f $x
\in A$ and $y \leq x$ in $P$, imply that $y \in A$. For $H \ci P$, there is
a smallest down set containing $H$, namely, 
$\setm{x}{x \leq h, \ \text{for some $h \in H$}}$; we use the notation 
$\Dg H$ for this
set. If $H = \set{a}$, we write $\Dg a$ for $\Dg \set{a}$.
Let $\Down P$ denote the set of all  down sets ordered by
set inclusion. Then $\Down P$ is a finite distributive lattice.

Conversely, for  a finite distributive lattice $D$, 
let $P$ denote the ordered set of join-irreducible elements of $D$.

\begin{named}{Birkhoff duality} 
\begin{enumeratei}\hfill

\item The assignment $P \to \Down P$ is a bijection between finite ordered sets
and finite distributive lattices. It assigns $a \in P$ to $\Dg a \in  \Down P$.
\item The inverse map is $D \to \Ji D$. It assigns $a \in D$ to $\id a \ii P$.
\end{enumeratei}
\end{named}

Figure~\ref{F:ree-new1} provides an example. 
The finite distributive lattice $D$ on the left has 20 elements 
and a fairly complicated structure.
The corresponding finite ordered set $P$ on the right has only 6 elements.

\begin{figure}[htb]
\centerline{\includegraphics{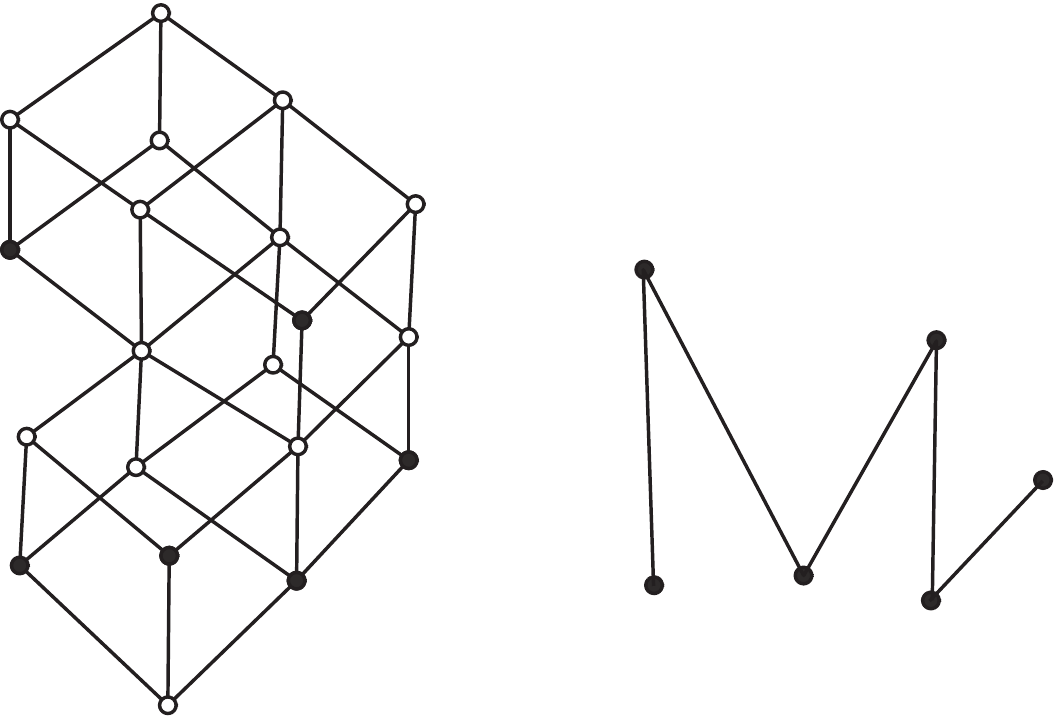}}
\caption{A finite distributive lattice and the corresponding finite ordered set}
\label{F:ree-new1}
\end{figure}

For a finite lattice $L$, the congruences form a finite distributive lattice, $\Con L$.
The corresponding finite ordered set is the ordered set of join-irreducible congruences,
$\Ji {\Con L}$.
For $a \prec b$ in $L$, we can form $\con{a,b}$,
the smallest congruence of~$L$ collapsing $a$ and~$b$.
Then $\con{a,b} \in \Ji {\Con L}$ and conversely.
  
\section{How it started}\label{S:started}

The first result in this field is in a 1942 paper, N.~Funayama and T.~Nakayama \cite{FN42}:
the congruence lattice of a lattice is distributive.
So the congruence lattice of a finite lattice is a finite distributive lattice.

The converse was discovered by R. P. Dilworth,
unpublished, \emph{circa} 1944 
(see the book, K.\,P. Bogart, R. Freese, and J.\,P.\,S. Kung, eds, \cite{BFK90}
for a discussion).

\begin{theorem}
Every finite distributive lattice $D$ can be represented
as the congruence lattice of a finite lattice $L$.
\end{theorem}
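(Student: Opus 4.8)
The plan is to give an explicit construction of a finite lattice $L$ with $\Con L \cong D$, working through the machinery set up by Birkhoff duality. By that duality, a finite distributive lattice $D$ is determined by the finite ordered set $P = \Ji D$, so it suffices to build a finite lattice $L$ whose join-irreducible congruences, ordered by inclusion, form an ordered set isomorphic to $P$. First I would recall that for $a \prec b$ in a finite lattice, $\con{a,b}$ is join-irreducible, and that the ordering of $\Ji{\Con L}$ is governed by \emph{prime-perspectivity}: $\con{a,b} \le \con{c,d}$ iff the prime interval $[a,b]$ can be reached from $[c,d]$ by a sequence of (prime-)perspectivities and transpositions inside $L$. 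So the entire problem reduces to engineering a lattice whose prime intervals, modulo this reachability relation, reproduce the covering/comparability structure of $P$.

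The construction I would use is the classical ``chopped lattice'' or gluing approach. For each element $p \in P$ I would introduce one ``generator'' prime interval, realized inside a small modular piece (for instance a copy of $M_3$ or a diamond-like gadget), chosen so that collapsing that prime interval forces exactly the congruence corresponding to $p$ and nothing smaller. For each covering relation $p \prec q$ in $P$ I would then glue in a connecting piece — a short chain together with a spanning $M_3$ or a ``bridge'' of prime intervals — arranged so that the congruence generated by the $q$-interval, when pushed through the bridge by prime-perspectivities, also collapses the $p$-interval, thereby forcing $\con{p} \le \con{q}$ in $\Con L$; conversely, because the pieces are glued only along these prescribed intervals, no unwanted collapses propagate. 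One then takes the lattice $L$ to be the result of pasting all these finitely many finite pieces together (amalgamating over shared chains), which is again a finite lattice.

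After the construction, the verification splits into two inequalities. The easy direction is that $\Con L$ is \emph{at least} as rich as $D$: each designed prime interval genuinely yields a join-irreducible congruence, and the bridges genuinely force the intended order relations, so there is an order-embedding $P \hookrightarrow \Ji{\Con L}$, hence a lattice embedding $D \hookrightarrow \Con L$ preserving $0$ and $1$. The harder direction — and this is where I expect the main obstacle to lie — is showing $\Con L$ is \emph{no larger} than $D$: one must prove that every join-irreducible congruence of $L$ is one of the designed ones, i.e.\ that the local modular gadgets do not secretly create extra join-irreducibles and that the gluing does not create spurious prime-perspectivities linking intervals that should be incomparable. This is a careful but finite case analysis of how prime-perspectivity behaves across each glued junction: one checks that in each gadget the only nontrivial congruences are the expected ones, and that a prime-perspectivity crossing a junction must pass through a designated bridge interval, so no new order relations appear.

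Concluding, once both inclusions are established we get $\Ji{\Con L} \cong P$ as ordered sets, and applying Birkhoff duality (part (i) of the boxed statement) in the form $D \cong \Down{P} \cong \Down{\Ji{\Con L}} \cong \Con L$ finishes the proof. I would remark that the number of elements of $L$ can be kept polynomial — indeed bounded by something like $O(|P|^2)$ after counting the gadgets and bridges — and that with extra care the same construction can be made to land inside nicer lattice classes (sectionally complemented, semimodular, etc.), which is exactly the kind of refinement the later sections of the survey pursue.
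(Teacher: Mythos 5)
First, a point of reference: the paper does not actually prove this theorem; it attributes the result to Dilworth and cites the first published proof (G. Gr\"atzer and E.\,T. Schmidt \cite{GS62}), and what it does sketch is the \emph{Proof-by-Picture} of the stronger Theorem~\ref{T:PS} in Section~\ref{S:semimodular}. Your overall plan --- reduce to representing $P=\Ji D$ as $\Ji{\Con L}$ via Birkhoff duality, build a gadget per element and per covering relation of $P$, glue, and verify $\Ji{\Con L}\cong P$ --- is the standard and correct strategy, and it matches the spirit of that Section~\ref{S:semimodular} construction. But as written there are two genuine gaps.

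The first is concrete and would make the construction fail: your proposed bridge for a covering $p\prec q$ is ``a spanning $\mthree$ or a bridge of prime intervals,'' but $\mthree$ is a \emph{simple} lattice, so any congruence collapsing one of its edges collapses all of them. An $\mthree$-bridge therefore forces $\con{\fp}=\con{\fq}$, not the strict relation $\con{\fq}\succ\con{\fp}$. This is exactly the division of labor in the paper's construction: cover-preserving $\SM{3}$'s are inserted into monochromatic covering squares precisely to force \emph{equality} of the congruences of two same-colored edges, while the strict covering $\con{a}>\con{b}$ is produced by an asymmetric gadget (the paper's $\seight$; in the classical chopped-lattice proof of Dilworth's theorem, the gadget $N_6=N(p,q)$ plays this role). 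Without specifying such a one-way gadget your construction cannot separate $\con{\fp}$ from $\con{\fq}$. The second gap is that the entire mathematical content of the theorem --- showing the glued lattice has \emph{no} join-irreducible congruences beyond the designed ones and no unintended comparabilities among them --- is named as ``a careful but finite case analysis'' but not carried out, and it cannot be carried out until the gadgets and the gluing (over which sublattices? ideals, chains, single elements?) are pinned down, since that analysis depends entirely on those choices.
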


The first proof of the this result was published in 1962
(see G. Gr\"atzer and E.\,T. Schmidt \cite{GS62});
it represented $D$ with a \emph{sectionally complemented} lattice $L$.

\section{Representing with semimodular latices}\label{S:semimodular}

Can we represent every finite distributive lattice $D$
as  $\Con L$, the congruence lattice of a~finite \emph{semimodular} lattice $L$?

Or, equivalently, can we represent every finite ordered set $P$
as  $\Ji \Con L$, the ordered set of join-irreducible congruences 
of a finite \emph{semimodular} lattice $L$?

Indeed, we can, as verified in G. Gr\"atzer, H.~Lakser, and  E.\,T. Schmidt~\cite{GLS98a}.

\begin{theorem}\label{T:PS}
Every ﬁnite distributive lattice $D$ can be represented
as the congruence lattice of a ﬁnite planar semimodular lattice $L$.
\end{theorem}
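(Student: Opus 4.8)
The plan is to reduce the statement, via Birkhoff duality, to a construction problem about ordered sets, and then to assemble $L$ from small planar semimodular pieces. Write $D \cong \Down P$ for a finite ordered set $P$. Since $K \mapsto \Con K$ corresponds, on the dual side, to $K \mapsto \Ji \Con K$, it suffices to produce a finite planar semimodular lattice $L$ together with an order isomorphism $P \cong \Ji \Con L$; recall that the elements of $\Ji \Con L$ are exactly the principal congruences $\con{a,b}$ with $a \prec b$ in $L$, and that $\con{a,b} \le \con{c,d}$ holds precisely when collapsing the prime interval $[c,d]$ forces collapsing $[a,b]$. The elementary building block is the covering square $\mathsf{B}_2$, the four-element Boolean lattice: it is planar and semimodular, and its two prime-interval classes generate two \emph{incomparable} join-irreducible congruences. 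More generally, a grid $\mathbf{C}_m \times \mathbf{C}_n$ is planar, semimodular (indeed distributive), and has only mutually incomparable join-irreducible congruences — so grids already realize antichains, and the real content of the theorem is the creation of \emph{order} among the $\con{a,b}$.

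For that one needs a forcing gadget: a small planar semimodular lattice $\mathsf{S}$ (for instance the seven-element lattice $\mathsf{S}_7$ obtained by inserting a fork into a square) carrying two distinguished prime intervals $\mathfrak{a}$ and $\mathfrak{b}$ with $\con{\mathfrak{b}} < \con{\mathfrak{a}}$ in $\Con \mathsf{S}$, i.e., collapsing $\mathfrak{a}$ forces collapsing $\mathfrak{b}$ but not conversely. Gluing such a gadget onto a larger planar semimodular lattice — by a Hall--Dilworth gluing over a chain, or, equivalently, by a fork insertion into a cell in the sense of G. Cz\'edli — preserves planarity and semimodularity and has the effect of \emph{composing} forcing relations. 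Now fix a linear extension $p_1, \dots, p_n$ of $P$ and build $L$ in stages: at stage $i$ introduce a prime interval $\mathfrak{p}_i$ meant to represent $p_i$, and for each lower cover $p_j \prec p_i$ in $P$ install a copy of $\mathsf{S}$ with $\mathfrak{a}$ placed (up to projectivity) at $\mathfrak{p}_i$ and $\mathfrak{b}$ at $\mathfrak{p}_j$, while incomparable pairs of $P$ are kept independent by routing their intervals only through plain squares. Laying all of this out inside a sufficiently large grid used as a scaffold, with forks inserted only inside its cells, keeps $L$ planar; transitivity of the forcing relations then yields $\con{\mathfrak{p}_i} \ge \con{\mathfrak{p}_j}$ whenever $p_j \le p_i$ in $P$.

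It then remains to verify that $L$ is a finite planar semimodular lattice (each step preserved these properties), that every join-irreducible congruence of $L$ equals some $\con{\mathfrak{p}_i}$ — in particular that the auxiliary prime intervals of the gadgets do not survive as new join-irreducibles — and, the genuinely delicate point, that $\con{\mathfrak{p}_i} \ge \con{\mathfrak{p}_j}$ holds \emph{only} when $p_j \le p_i$ in $P$. Granting all this, $p_i \mapsto \con{\mathfrak{p}_i}$ is the required order isomorphism, and dualizing gives $\Con L \cong \Down P \cong D$. The hard part is exactly this ``no spurious collapses'' clause together with keeping the diagram planar when $P$ is wide or has crossing comparabilities, since one cannot route arbitrarily many gadgets through a bounded region of the plane, so the scaffold and the placement of the $\mathfrak{p}_i$ must be chosen with care. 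The clean way to control which prime interval forces which is to describe the relation $\con{\mathfrak{q}} \ge \con{\mathfrak{p}}$ combinatorially — via \emph{swings} of prime intervals in a planar diagram, the tool developed in Sections~\ref{S:Swing}--\ref{S:Lamps} below — and then to design the gadgets and the layout so that the only available swings reproduce precisely the covering relations of $P$. Making this work for an arbitrary finite ordered set $P$, and not merely for forests, is where the real work of the proof lies.
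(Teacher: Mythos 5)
Your reduction via Birkhoff duality and the overall plan (small gadgets each creating one covering relation among join-irreducible congruences, assembled so that the forcing relation reproduces the order of $P$) match the shape of the paper's Proof-by-Picture, but there is a decisive gap: every building block you allow yourself --- covering squares, grids, and the fork-inserted gadget $\sseven$, laid out by inserting forks into the cells of a grid scaffold --- is \emph{slim}, so the lattice $L$ your construction produces is an SPS lattice. The paper proves (Two-Cover Theorem, Theorem~\ref{T:Two-Cover}; Two-max Theorem, Theorem~\ref{T:Two-max}) that the ordered set of join-irreducible congruences of an SPS lattice satisfies strong restrictions; in particular, no element of $\Ji{\Con L}$ can have three covers. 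Hence for any $P$ containing the configuration $F_3$ of Figure~\ref{F:3fork}, no lattice built by your method can satisfy $\Ji{\Con L}\cong P$. The difficulty you flag at the end (``making this work for an arbitrary finite ordered set $P$, and not merely for forests'') is therefore not just the hard part of the proof: with your toolkit it is provably impossible, and determining what \emph{is} achievable slimly is exactly the open Problem of Section~\ref{S:main}.

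The ingredient you are missing is the one the paper's construction leans on. After the forcing gadgets (copies of $\seight$, glued so that each block $A_a$ realizes all covers of $a$) are stacked into a glued sum $S$, the several prime intervals that are supposed to carry the \emph{same} join-irreducible congruence lie in different blocks and generate \emph{distinct} congruences; your proposal never explains how these are to be identified. The paper does it by attaching a distributive grid and converting every covering square colored by the same color twice into a cover-preserving $\SM{3}$; it is precisely these $\mthree$ sublattices that fuse the congruences, and they cannot be simulated by forks or by Hall--Dilworth gluings over chains without leaving the slim class. So you must either admit $\mthree$'s into the construction --- at which point you are essentially reproducing the Gr\"atzer--Lakser--Schmidt argument --- or accept that the statement your method could at best establish is false.
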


It was a great surprise to us that the proof yielded a \tbf{planar lattice}.

We now provide a \emph{Proof-by-Picture}, utilizing the semimodular lattice  $\seight$,
see Figure~\ref{F:S8}, to represent the ordered set $P$ of Figure~\ref{F:smorder}
as $\Ji \Con L$ of a finite semimodular lattice~$L$

A \emph{colored lattice} is a finite lattice 
(some) of whose edges (prime intervals) are labeled 
so that if the edges $\fp$
and $\fq$ are of the same color, then $\con{\fp} = \con{\fq}$. 
These labels represent equivalence classes of edges 
defining the same join-irreducible congruence.

\begin{figure}[b!]
\centerline{\includegraphics{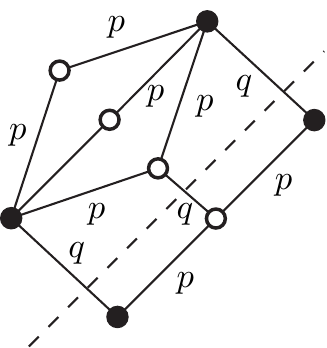}}
\caption{The semimodular lattice $\seight$, colored with $p < q$}\label{F:S8}
\end{figure}

\begin{figure}[htb]
\centerline{\includegraphics{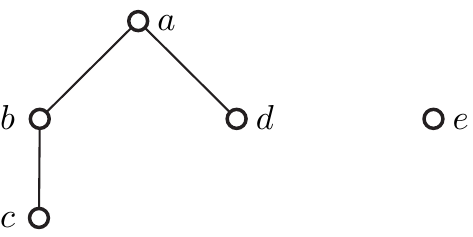}}
\caption{The ordered set $P$}\label{F:smorder}
\end{figure}

\begin{figure}[htb]
\centerline{\includegraphics{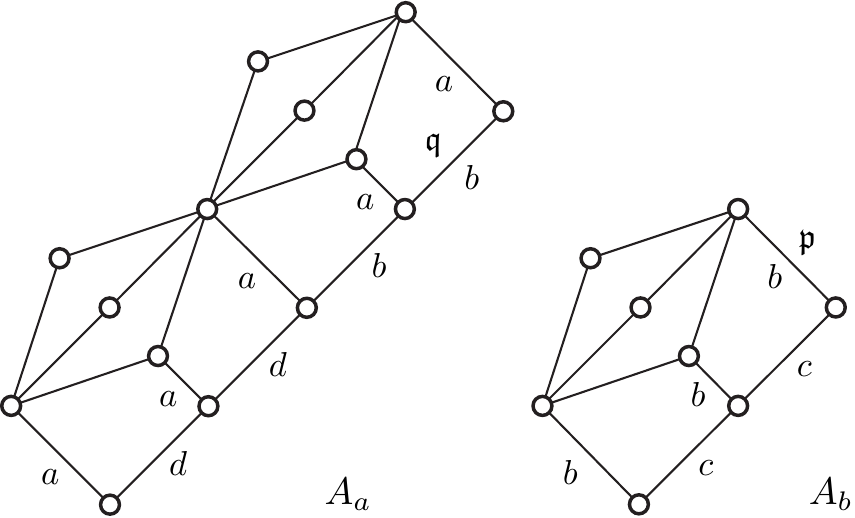}}
\caption{Two colored lattices}\label{F:AaAb}
\end{figure}

To represent $P$, we start out by
constructing the colored lattices $A_a$ and~$A_b$  (see Figure~\ref{F:AaAb}).
We obtain $A_a$ by gluing two copies of $\seight$
together, coloring it with $\set{a,b,d}$ so that $\con{a} > \con{b}$ is
accomplished in the top~$\seight $ of~$A_a$ and $\con{a} > \con{d}$ is accomplished
in the bottom $\seight $ of $A_a$. The~lattice~$A_b$ is $\seight $ colored 
by $\set{b,c}$, so~that $\con{b} > \con{c}$ in $A_b$.

Observe that the lattice $A_a$ takes care \emph{of all} $a \cov x$ orderings;
in the example, there are only two. We could do three coverings by gluing
three copies of $\seight $ together, and so on.

Form the glued sum $S$ of $A_a$ and $A_b$; all the covers of $P$ are
taken care of in~$S$. There is only one problem: $S$ is not a 
colored lattice; in this example, if $\fp$ is a prime interval of color $b$
in $S_b$ (as in Figure~\ref{F:AaAb}) and $\fq$ is a prime interval of color $b$
in $S_b$  (as in Figure~\ref{F:AaAb}), then in $S$ we have $\con \fp \mm \con \fq = \zero$.
Of course, we should have $\con \fp = \con \fq = \con b$. We accomplish this
by~extending $S$ to the lattice $L$ of Figure~\ref{F:Lnew}. In $L$, the
black-filled elements form the sublattice~$S$.

\begin{figure}[t!]
\centerline{\scalebox{.78}{\includegraphics{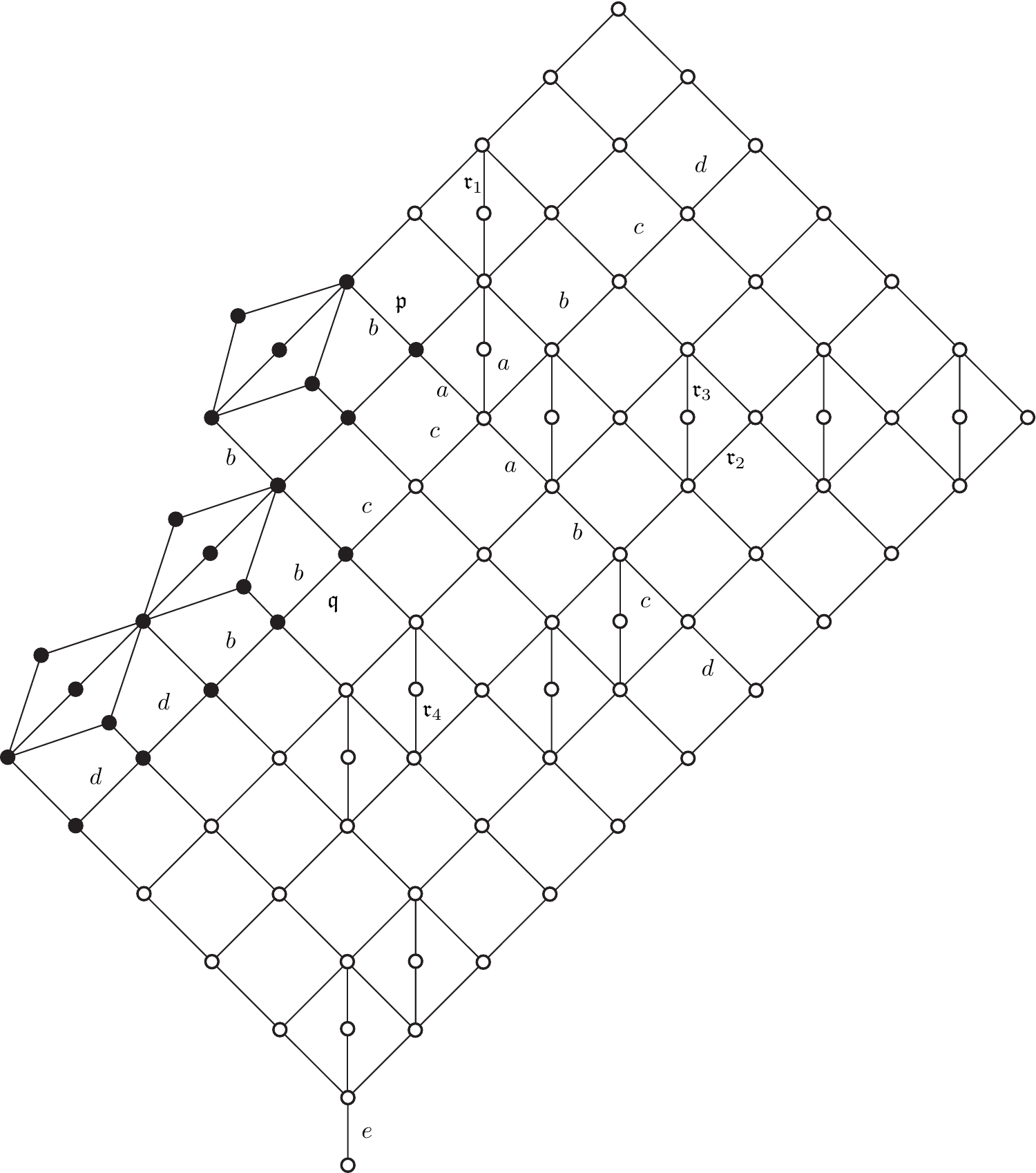}}}
\caption{The lattice $L$}\label{F:Lnew}
\end{figure}

\begin{figure}[hbt]
\centerline{\includegraphics{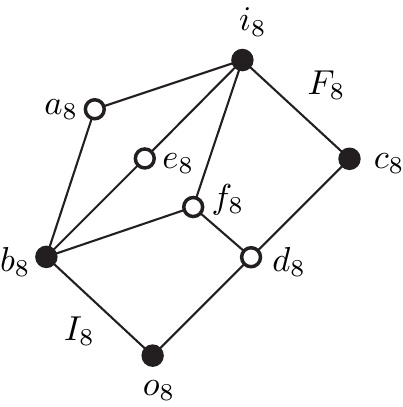}}
\caption{The semimodular gadget $\seight $, with notation}\label{F:S8labels}
\end{figure}

As you see, we extend $S$ by adding to it a distributive ``grid.'' The right
corner is $\SC 5^2$ colored by $\set{a,b,c,d}$; each of the four covering
squares colored by the same color twice are made into a cover-preserving $\SM{3}$. This
makes the coloring behave properly in the right corner. In the rest of the
lattice we do the same:  we look for a covering square colored by the same
color twice, and make it into a cover-preserving~$\SM{3}$. This makes $L$ into a colored
lattice: any two prime intervals of the same color generate the same
congruence. 

Finally, we remember the element $e \in P$. We add a ``tail'' to the lattice
and color it $e$.

The resulting lattice $L$ is planar and semimodular 
and $\Ji{\Con L} $ is isomorphic to $P$ 
with the isomorphism $x \mapsto \con{x}$ for $x \in \set{a,b,c,d,e}$.

\section{The Problem}\label{S:main}

According to Theorem~\ref{T:PS}, 
every ﬁnite distributive lattice $D$ can be represented
as the congruence lattice of a ﬁnite planar semimodular lattice $L$.
In the construction, we strongly utilize $\mthree$ sublattices. Could we do the construction without them?

Let us call a ﬁnite planar semimodular lattice $L$ \emph{slim}, if it has no $\mthree$ sublattice.
Let us define an \emph{SPS lattice} as a slim, ﬁnite, planar semimodular lattice $L$ 
without an $\mthree$ sublattice. (SPS stands for \tbf{S}lim, \tbf{P}lanar, \tbf{S}emimodular.)

In the paper \cite{gG17a}, I~proposed the following.

\begin{named}{Problem} 
Characterize the congruence lattices of SPS lattices.
\end{named}

\section{The Two-cover Property}\label{S:Two}

A finite ordered set $P$ satisfies the \emph{Two-Cover Condition}
(see my paper  \cite{gG17a}),
if any element of~$P$ has at most two covers.
Figure~\ref{F:3fork} shows the diagram of an ordered set $F_3$
failing the Two-Cover Condition. 
A finite ordered set $P$ satisfies the Two-Cover Condition
if{}f $F_3$
has no \emph{cover-preserving embedding} into $P$.

The following is the main result of my paper \cite{gG17a}.

\begin{theorem}[Two-Cover Theorem]\label{T:Two-Cover}
The ordered set of join-irreducible congruences 
of an SPS lattice $L$ has the Two-Cover Condition.
\end{theorem}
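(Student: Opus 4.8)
\section*{Proof proposal for the Two-Cover Theorem}

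The plan is to argue by contradiction at the level of the lattice: assume $F_3$ has a cover-preserving embedding into $\Ji{\Con L}$ and extract from this a forbidden configuration inside $L$ itself, contradicting slimness (equivalently, the absence of an $\mthree$ sublattice). Concretely, suppose $\gamma$ is a join-irreducible congruence of $L$ with three distinct join-irreducible covers $\delta_1, \delta_2, \delta_3$ in $\Ji{\Con L}$. Each of these congruences is of the form $\con{x,y}$ for a suitable prime interval, so I would fix prime intervals $\fp$ with $\con\fp = \gamma$ and $\fr_i$ with $\con{\fr_i} = \delta_i$ chosen as ``low'' as possible in $L$ — e.g.\ so that $\fp$ is a minimal-colored edge for $\gamma$. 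The covering relation $\gamma \prec \delta_i$ in $\Ji{\Con L}$ translates, via the standard prime-projectivity / join-dependency description of $\Ji{\Con L}$ for a finite (semimodular) lattice, into a concrete ``one-step'' relationship between the edges $\fp$ and $\fr_i$: roughly, $\fp$ is forced by $\fr_i$ through a single covering square or a short chain of such squares, with no congruence strictly between.

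The key step is to use planarity and semimodularity to normalize this local picture. In an SPS lattice the relevant local moves are governed by covering squares, and the relation ``$\con\fp \prec \con\fr$ with $\fp$ obtained from $\fr$'' should, after descending to a minimal witness, be realized by a single covering square $S_i$ with $\fr_i$ and $\fp$ as two of its (opposite, or adjacent) edges. Having three such covers $\delta_1,\delta_2,\delta_3$ over the same $\gamma$ then yields three covering squares $S_1,S_2,S_3$ all sharing the edge $\fp$ (or sharing the bottom or top vertex of $\fp$). In a planar lattice, an edge $\fp = [u,v]$ has a left neighbor and a right neighbor among covering squares on each side, so at most two covering squares can sit ``across'' $\fp$; forcing a third one to also be glued along $\fp$ pushes two of the squares to overlap, and the comparabilities this creates between their outer corners produce either an $\mthree$ among the six new elements or a cover-preserving $\mthree$ (a ``fork''), contradicting that $L$ is slim.

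The main obstacle, and the part that needs the most care, is the precise dictionary between ``$\gamma \prec \delta$ in $\Ji{\Con L}$'' and the local geometry of edges in $L$. One cannot in general pass from $\con\fp \prec \con\fr$ to a single covering square in one hop; a priori there is a chain of prime-perspectivities/projectivities connecting $\fr$ to $\fp$, and the covering relation only says nothing join-irreducible sits strictly between. So the real work is a lemma of the form: \emph{in an SPS lattice, if $\con\fp \prec \con\fr$, then there is a prime interval $\fp'$ with $\con{\fp'} = \con\fp$ such that $\fp'$ and $\fr$ are two edges of a common covering square} (or connected by a very short, controlled configuration). This is where slimness and semimodularity both get used essentially — semimodularity to control how colors propagate through covering squares, slimness to rule out the $\mthree$-type branchings that would otherwise let many distinct covers collapse onto one $\gamma$. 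Once that localization lemma is in hand, the counting/planarity argument in the previous paragraph closes the proof; I would also double-check the degenerate cases where the three squares share only a vertex of $\fp$ rather than the whole edge, since planarity bounds the number of covering squares at a vertex on each side as well.
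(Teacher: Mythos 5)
This survey does not actually prove the Two-Cover Theorem; it states it and cites \cite{gG17a}, whose proof goes through the structure theory of fork extensions (and can also be run through the Swing Lemma of Section~\ref{S:Swing}). Measured against either of those, your proposal has a genuine gap, and it is exactly the one you flag yourself: the ``localization lemma'' asserting that a cover $\gamma \prec \delta_i$ in $\Ji{\Con L}$ can be witnessed by a single covering square containing representative edges of both colors. This is not merely unproved, it is the wrong local picture. In a covering square of a slim semimodular lattice the opposite edges are perspective, so they generate the \emph{same} join-irreducible congruence; a covering square can never witness a strict inequality $\gamma < \delta$, let alone a cover. Consequently the planned endgame --- three covering squares glued along $\fp$, forced by planarity to overlap and produce an $\mthree$ --- never gets off the ground: the three covers $\delta_1,\delta_2,\delta_3$ simply do not translate into three squares at $\fp$, and a planarity count of squares at an edge says nothing about covers in $\Ji{\Con L}$.

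The correct localization, which the Swing Lemma makes precise, is that strict comparability of join-irreducible congruences in an SPS lattice is created only by swings, that is, at a hinge $1_\fp=1_\fq$ covering at least three elements (the top of a multifork, sitting in a peak $\sseven$ sublattice --- not a covering square). At such a hinge all the internal edges carry the same color, and only the two external edges (the left-most and the right-most under the hinge) can carry strictly larger colors via $\exswing$. The ``at most two'' in the Two-Cover Theorem is exactly the count of these two external edges: one shows, using the monotonicity $1_{\fr_0}\geq\dots\geq 1_{\fr_n}$ in \eqref{E:geq}, that every $\delta$ with $\delta \succ \gamma$ must lie above the color of one of them. So the number two comes from the left/right dichotomy at a single hinge, not from how many squares planarity allows around an edge. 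Your instinct to reduce a covering in $\Ji{\Con L}$ to a forbidden local configuration in $L$ is the right one, but the configuration to aim at is the multifork/$\sseven$, and the tool that delivers the reduction is the Swing Lemma (or, in \cite{gG17a}, the description of how $\Ji{\Con L}$ changes under fork insertion), not a chain of covering squares terminating in an $\mthree$.
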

 
\begin{figure}[htb]
\centerline{\includegraphics{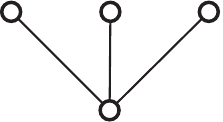}}
\caption{The ordered set $F_3$ for the Two-Cover Condition}
\label{F:3fork}
\end{figure} 

The Two-Cover Condition is a necessary condition. 
G.~Cz\'edli~\cite{gC14a} proves that it is not sufficient.  
My paper \cite{gG15c} provides a short proof.

Let us say that a finite ordered set $P$ satisfies the 
 \emph{Two-max Condition},
if~$P$ has at least two maximal elements.

In my note \cite{gG20}, I prove the following very easy result.

\begin{theorem}[Two-max Theorem]\label{T:Two-max}
\index{Two-max Theorem}%
The ordered set of join-irreducible congruences 
of a nontrivial SPS lattice $L$ has the Two-max Condition.
\end{theorem}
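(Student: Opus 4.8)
The plan is to recast the statement via Birkhoff duality and then induct along the construction of SPS lattices. For a finite distributive lattice $D=\Down P$ the coatoms are exactly the down sets $P\setminus\{m\}$ with $m$ a maximal element of $P$; taking $D=\Con L$ and $P=\Ji{\Con L}$, the maximal elements of $\Ji{\Con L}$ thus correspond bijectively to the coatoms of $\Con L$. So the Two-max Condition for $\Ji{\Con L}$ is equivalent to the assertion that $\Con L$ has at least two coatoms, i.e., that the unit congruence $\nabla$ is not join-irreducible in $\Con L$ --- equivalently, since $\nabla=\bigvee\{\,\con{\fp}:\fp\text{ a prime interval of }L\,\}$ and a join-irreducible element of a finite lattice must equal one of the joinands in any such representation, that \emph{no prime interval of $L$ generates $\nabla$}. (The word ``nontrivial'' has to be read here as ``of length at least $2$'': for the two-element chain $\Con L$ has a single coatom, so a restriction of this sort is unavoidable.)

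I would prove the equivalent form ``$\Con L$ has at least two coatoms'' by induction along fork insertions. By the structure theory of slim semimodular lattices (Cz\'edli--Schmidt; see also the tools described in Sections~\ref{S:Swing}, \ref{S:Natural}, and~\ref{S:Lamps}), it suffices to handle lattices obtained from a direct product of two finite chains by finitely many fork insertions. For the base case, if $C$ and $C'$ are finite chains of lengths $k$ and $k'$, then $\Con(C\times C')\cong\Con C\times\Con C'$, the product of the Boolean lattices on $k$ and on $k'$ atoms; hence $\Ji{\Con(C\times C')}$ is an antichain with $k+k'$ elements. A nontrivial such product has $k+k'=\ell\ge 2$, so all $\ell$ of these join-irreducible congruences are maximal and the base case holds with room to spare.

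For the inductive step I would invoke the effect of a fork insertion on congruences: inserting a fork into a covering square $\{o,a,b,i\}$ of an SPS lattice adds exactly one new join-irreducible congruence $\gamma$, leaves the rest of $\Ji{\Con L}$ order-embedded and unchanged, and places $\gamma$ strictly below the two congruences $\con{o,a}$ and $\con{o,b}$ carried by the edges of that square. Consequently every previously maximal join-irreducible congruence remains maximal --- the only new element $\gamma$ lies below two old congruences, so it is strictly above no old maximal one --- while $\gamma$ itself is not maximal. Thus a fork insertion does not change the number of maximal join-irreducible congruences, and by induction this number equals that of the initial product of chains, which is at least $2$. This yields the Two-max Condition for every nontrivial SPS lattice $L$.

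The step I expect to be the main obstacle is importing the inductive step cleanly: one needs the exact description of how $\Ji{\Con L}$ transforms under a fork insertion --- that the induced map on join-irreducible congruences is an order-embedding whose image misses exactly one element, namely the new congruence, and that this new congruence sits below the two square-edge congruences --- together with the reduction of an arbitrary SPS lattice to one built from a grid by fork insertions without losing any coatom of the congruence lattice. This is precisely the content that the three tools of Sections~\ref{S:Swing}, \ref{S:Natural}, and~\ref{S:Lamps} supply, which is why the theorem is ``very easy'' once they are available. A self-contained alternative is to localise at the top: by the Swing Lemma (Section~\ref{S:Swing}), if $1$ has a unique lower cover $c$, then no prime perspectivity or swing links $[c,1]$ to any other prime interval, so $\con{c,1}$ is a maximal element of $\Ji{\Con L}$ comparable only with itself, and any prime interval below $c$ produces a second maximal element; the argument is dual when $0$ has a unique atom. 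The only case this leaves open is a lattice that branches at both $0$ and $1$, and that is exactly where the structural induction above is needed.
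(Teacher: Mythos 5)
This survey does not actually contain a proof of the Two-max Theorem; it only cites the note \cite{gG20}, so there is no in-paper argument to compare yours against line by line. Judged on its own, your main argument has a genuine gap, and it is the one you yourself flag at the end: the induction along fork insertions reaches only the slim \emph{rectangular} lattices, since the Cz\'edli--Schmidt structure theorem builds exactly the SR lattices from grids by fork insertions. A general nontrivial SPS lattice need not be rectangular (chains already are not), and the passage to the general case goes through deleting doubly irreducible boundary elements (``corners''), an operation that can change the number of coatoms of the congruence lattice --- passing from $\SC 3$ to $\SC 2$ destroys a coatom --- so the reduction you defer is not a formality and the proof is incomplete as written. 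The inductive step itself is sound: that $\Ji{\Con{L[S]}}$ is $\Ji{\Con L}$ with a single new element adjoined strictly below the two edge-congruences of the square is precisely the main theorem of \cite{gG17a}, and your deduction that the set of maximal elements is unchanged does follow from it; your reading of ``nontrivial'' as excluding the two-element chain is also necessary and correct.

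The irony is that your ``self-contained alternative'' is the right proof and needs no structure theory at all; you stopped one case short. If $1$ has a unique lower cover $c$, you correctly observe via the Swing Lemma that $\con{c,1}$ is maximal and collapses no other edge, so any other edge (there is one, as $|L|\geq 3$) lies under a second maximal element. But the case where $1$ has two or more lower covers does \emph{not} require the structural induction: let $c_l$ and $c_r$ be the left-most and right-most coatoms. In any sequence \eqref{Eq:sequence} ending at $[c_l,1]$, condition \eqref{E:geq} forces every $1_{\fr_i}$ to equal $1$; a down-perspectivity between two distinct edges with top $1$ is impossible; and no swing can end at $[c_l,1]$, because $c_l$ is the left-most element covered by the hinge $1$. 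Hence $\con{c_l,1}$ is maximal, symmetrically so is $\con{c_r,1}$, and the same computation shows the two are distinct. Together with your unique-coatom case this settles every SPS lattice with at least three elements, and it is exactly the mechanism behind the proof of the Partition Property sketched in Section~\ref{S:proofs}, where the maximal elements of $\E P$ arise from upper-boundary edges split into a left and a right class.
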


It follows that the three-element chain $\SC3$,
which satisfies the Two-Cover Condition, cannot be represented
as the congruence lattice of~an SPS lattice;
of course, an example of minimal size.

G.~Cz\'edli \cite{gC21} discovered four major properties 
of the congruence lattices of SPS lattices. 
We'll discuss them in Section~\ref{S:major}.
We present the sixth major property in Section~\ref{S:3P3C}, see  G. Cz\'edli 
and G.~Gr\"atzer~\cite{CG21}. 

G. Cz\'edli\index{Cz\'edli, G.}  maintains a list of papers making contributions to this problem,
52 as of this writing, see

\texttt{http://www.math.u-szeged.hu/\~{}czedli/m/listak/publ-psml.pdf}\\
They are all listed in the References.

\section{The Tools}\label{S:Tools}
  
\subsection{The Swing lemma}\label{S:Swing}

For the edges $\fp, \fq$ of an SPS lattice $L$,
we define a binary relation:
\index{Swing}%
$\fp$~\emph{swings to} $\fq$, written as $\fp \swing \fq$,
if $1_\fp = 1_\fq$, 
this element covers at least three elements,
and $0_\fq$ is neither the left-most nor the right-most element
covered by $1_\fp = 1_\fq$. 
We call the element $1_\fp = 1_\fq$
\index{Hinge}%
the \emph{hinge} of the swing.\label{ToN:swings}
If $0_\fp$ is either the left-most or the right-most element
covered by the hinge, then we call the swing \emph{external},
in formula, $\fp \exswing \fq$. 
Otherwise, the swing is \emph{internal},
in formula, $\fp \inswing \fq$.
See Figure~\ref{F:Swings} for two examples; 
in the first, the hinge covers three elements and $\fp \exswing \fq$; 
in~the second, the hinge covers five elements
and $\fp \inswing \fq$.

\begin{figure}[hbt]
\centerline{\includegraphics{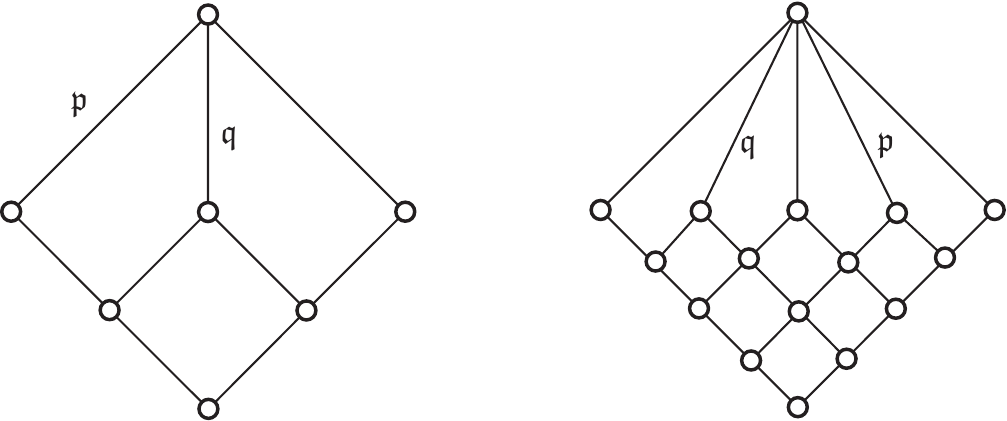}} 
\caption{Swings, $\fp \protect\swing \fq$}\label{F:Swings}
\end{figure}

\begin{lemma}[Swing Lemma]\label{L:SPSproj}\index{Swing Lemma}
Let $L$ be an SPS lattice\index{Lattice!SPS|}\index{SPS lattice|}\index{Lattice!slim, planar, semimodular (SPS)}
and let $\fp$ and $\fq$ be distinct edges in $L$. 
Then $\fq$ is collapsed by $\con{\fp}$ if{}f 
there exists a edge~$\fr$ 
and sequence of pairwise distinct edges
\begin{equation}\label{Eq:sequence}
\fr = \fr_0, \fr_1, \dots, \fr_n = \fq
\end{equation}
such that $\fp$ is up perspective to $\fr$, and 
$\fr_i$ is down perspective to or swings to $\fr_{i+1}$
for $i = 0, \dots, n-1$. 
In addition, the sequence \eqref{Eq:sequence} also satisfies 
\begin{equation}\label{E:geq}
   1_{\fr_0} \geq 1_{\fr_1} \geq \dots \geq 1_{\fr_n}.
\end{equation}
\end{lemma}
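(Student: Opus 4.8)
The plan is to prove the ``if'' direction and the ``only if'' direction separately, and to extract the chain condition \eqref{E:geq} as a by-product of the ``only if'' argument. For the ``if'' direction I would verify that each of the elementary moves preserves collapsing: if $\fp$ is up perspective to $\fr$, then $\con\fp = \con\fr$, so $\fr$ is collapsed by $\con\fp$; if $\fr_i$ is down perspective to $\fr_{i+1}$, then again $\con{\fr_i} = \con{\fr_{i+1}}$; and if $\fr_i$ swings to $\fr_{i+1}$, then $1_{\fr_i} = 1_{\fr_{i+1}}$ covers $0_{\fr_i}$ and $0_{\fr_{i+1}}$, and collapsing one of these prime intervals forces the common top down to meet its base, hence collapses the other one too (this is the standard local computation in a semimodular lattice, using that the hinge covers at least three elements so the two edges lie in a common covering $\SM{3}$ or its slim analogue). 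Chaining these facts along \eqref{Eq:sequence} gives $\con\fp$ collapses $\fq$.

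For the ``only if'' direction I would use the description of $\con\fp$ as the transitive closure of perspectivities together with the planarity and slimness of $L$. The key structural input is that in an SPS lattice a nontrivial projectivity between prime intervals can be normalized: by results on the interaction of perspectivity with the planar diagram (cover-preserving $\SM3$'s being the only obstruction to distributivity), any sequence of perspectivities witnessing $\fq \in \con\fp$ can be rewritten so that it first goes \emph{up} from $\fp$ to some $\fr$, and then proceeds by a sequence of down perspectivities and swings. Here the new ingredient relative to the classical (distributive) case is precisely the swing: when one passes through a ``fork'' (an element covering three or more elements), a down-up zigzag across the fork collapses to a single swing step. I would make this precise by induction on the length of the original perspectivity sequence, each time locating the first place where the top of the current edge goes up after having gone down, and showing that such a detour either cancels or can be replaced by a swing at a hinge, which lowers a suitable complexity measure.

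The chain inequality \eqref{E:geq} would then come out of the normalized sequence automatically: after the initial up perspective step to $\fr = \fr_0$, every subsequent move is either a down perspectivity, which strictly (or weakly) lowers the top, $1_{\fr_i} \ge 1_{\fr_{i+1}}$, or a swing, which keeps the top fixed, $1_{\fr_i} = 1_{\fr_{i+1}}$; in both cases $1_{\fr_i} \ge 1_{\fr_{i+1}}$, giving the monotone chain. Pairwise distinctness of the $\fr_i$ can be arranged by deleting loops in the sequence, which is compatible with the monotonicity since deleting a loop only shortens the chain.

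\textbf{Main obstacle.} The hard part is the normalization step in the ``only if'' direction: showing that an arbitrary alternating up/down perspectivity sequence in $L$ can be rearranged into the special ``up once, then only down-or-swing'' form. This requires a careful planar case analysis of how two consecutive perspectivities interact near a covering square versus near a fork, and it is where slimness (no $\mthree$ sublattice) and semimodularity are essential — without them the detour-elimination can fail. I expect the bookkeeping to hinge on choosing the right well-founded complexity measure (e.g. the pair consisting of the height of the highest top reached and the length of the sequence, ordered lexicographically) so that each rewriting strictly decreases it.
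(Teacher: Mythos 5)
A preliminary remark on the comparison itself: this survey states the Swing Lemma without proof. It is imported from \cite{gG15b} (a direct proof is in G.~Cz\'edli and G.~Makay \cite{CM17}), and the present paper only supplies the skiing visualization. So there is no in-paper argument to set your proposal against; it can only be judged against the cited proofs, whose overall shape --- an easy ``if'' direction by checking that each elementary step preserves being collapsed by $\con{\fp}$, and a hard ``only if'' direction obtained by normalizing an arbitrary projectivity sequence into ``one climb, then slides and swings'' --- your outline correctly reproduces.

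The genuine gap is that the normalization step, which you yourself label the ``main obstacle,'' is the whole content of the lemma, and your proposal contains no argument for it: no specified complexity measure that is actually shown to decrease, no case analysis of how a down-then-up detour meets a multifork (an element with three or more lower covers), and no verification that the rewriting yields pairwise distinct edges and the monotone chain \eqref{E:geq} rather than merely some witnessing sequence. Saying that the detour ``either cancels or can be replaced by a swing'' is precisely the statement to be proved; in the published proofs this step consumes essentially all of the work (via trajectories in \cite{gG15b}, via a direct induction in \cite{CM17}). In addition, your treatment of the swing step in the ``if'' direction is phrased symmetrically (``collapses the other one too''), but the swing is not symmetric at the congruence level: for $\fr_i \swing \fr_{i+1}$ one has $\con{\fr_{i+1}} \leq \con{\fr_i}$ because $0_{\fr_{i+1}}$ is an interior lower cover of the hinge, while the reverse inequality fails in general (in $\sseven$ the interior edge generates a strictly smaller congruence than a boundary edge). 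The correct local computation is also not ``the top drops to its base'': from $0_{\fr_i} \equiv 1_{\fr_i}$ one first meets with another lower cover of the hinge to descend, and then joins back up to reach $\fr_{i+1}$; writing this out is exactly where slimness and semimodularity enter, and it should not be waved through.
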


The Swing Lemma is easy to visualize using skiing analogies.
\begin{enumeratei}
\item Perspectivity up is ``climbing'' (see Figure~\ref{F:gondola}).
\item Perspectivity down is ``sliding'' (see Figure~\ref{F:downps}).
\item Swinging  is ``turning'' (see Figure~\ref{F:Swing}).
\end{enumeratei}

So we get from $\fp$ to $\fq$ by climbing once
and then alternating sliding and swinging.
In the example of Figure~\ref{F:traj}, 
we climb up from $\fp$ to $\fr = \fr_0$, 
swing from $\fr_0$ to~$\fr_1$,
slide down from $\fr_1$ to $\fr_2$,
swing from $\fr_2$ to $\fr_3$,
and finally slide down from $\fr_3$ to $\fr_4$.
\subsection{Natural diagrams and $\E C_1$-diagrams}\label{S:Natural}

As in my joint paper with E.~Knapp \cite{GKn07},
we call the planar semimodular lattice $L$ \emph{rectangular},
if its left boundary chain has exactly one doubly-irreducible element,~$\cornl L$,
and its right boundary chain has exactly one doubly-irreducible element,
$\cornr D$, and these elements are complementary,\label{ToN:cornr}
that~is,
\begin{align*}
\cornl D \jj \cornr D&=1,\\
\cornl D \mm \cornr D&=0.
\end{align*}
We call a slim rectangular lattice an \emph{SR lattice}.

In this section, we consider nice diagrams for SR lattices.

\begin{figure}[t!]
\centerline{\includegraphics[scale = .6]{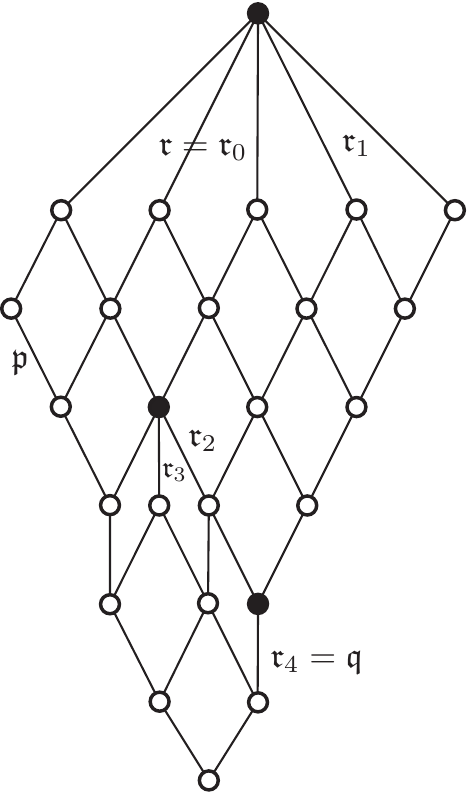}}
\caption{Illustrating the Swing Lemma}\label{F:traj}

\bigskip

\bigskip

\centerline{\includegraphics[scale = .26]{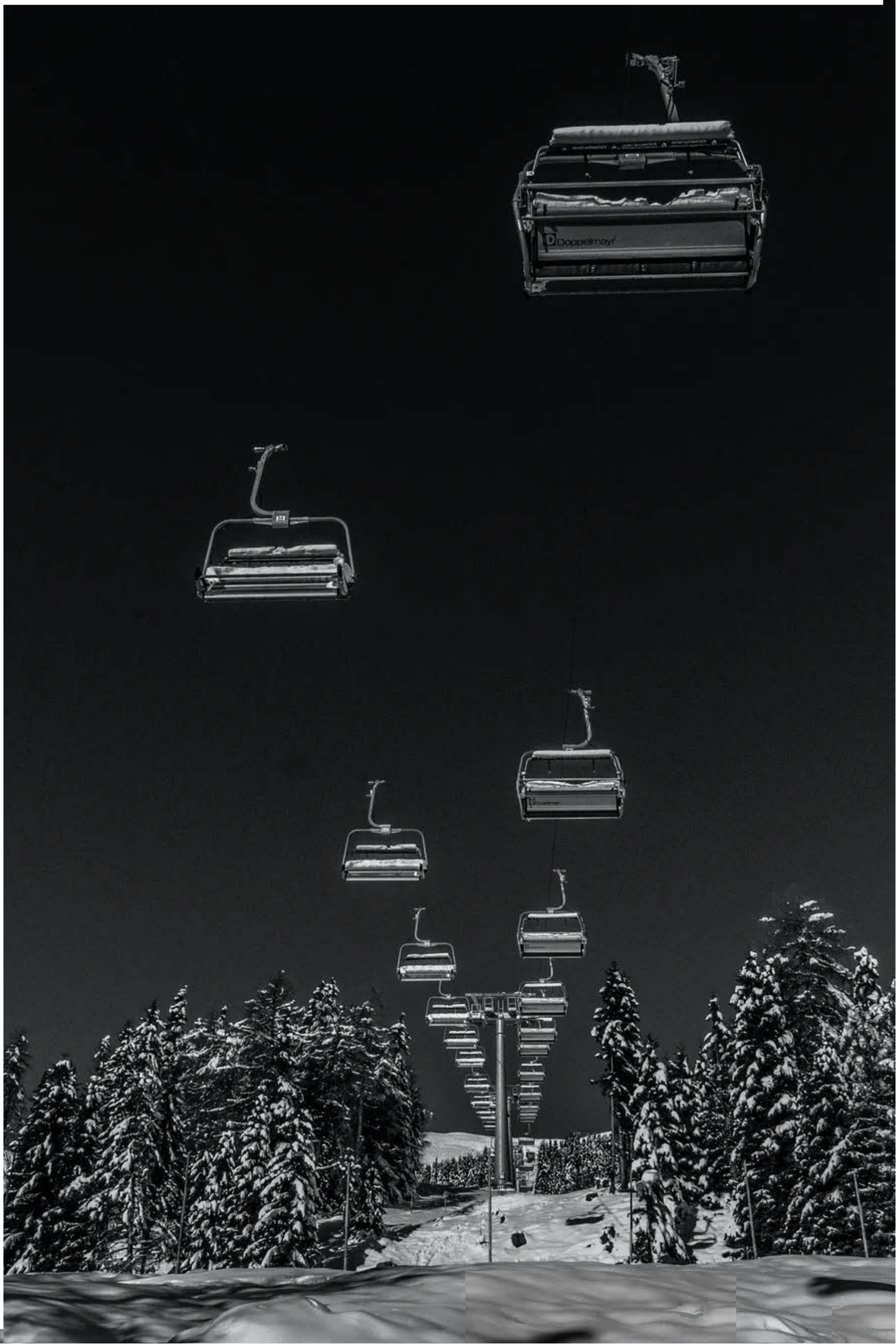} \q\q\q\q\q\q\q\q\q\q\ \ 
\includegraphics[scale = .6]{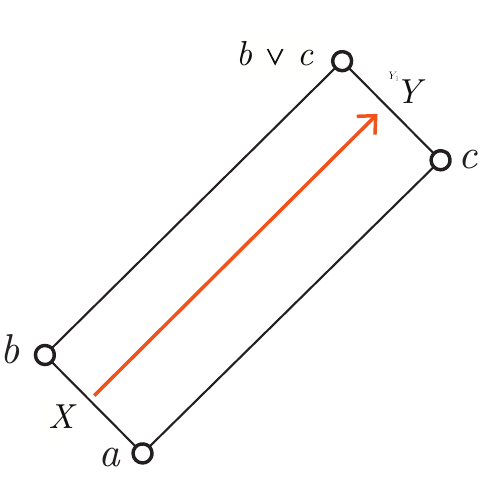}}
\caption{Take the gondola up---up-perspectivity}
\label{F:gondola}

\bigskip

\bigskip

\centerline{\includegraphics[scale = .26]{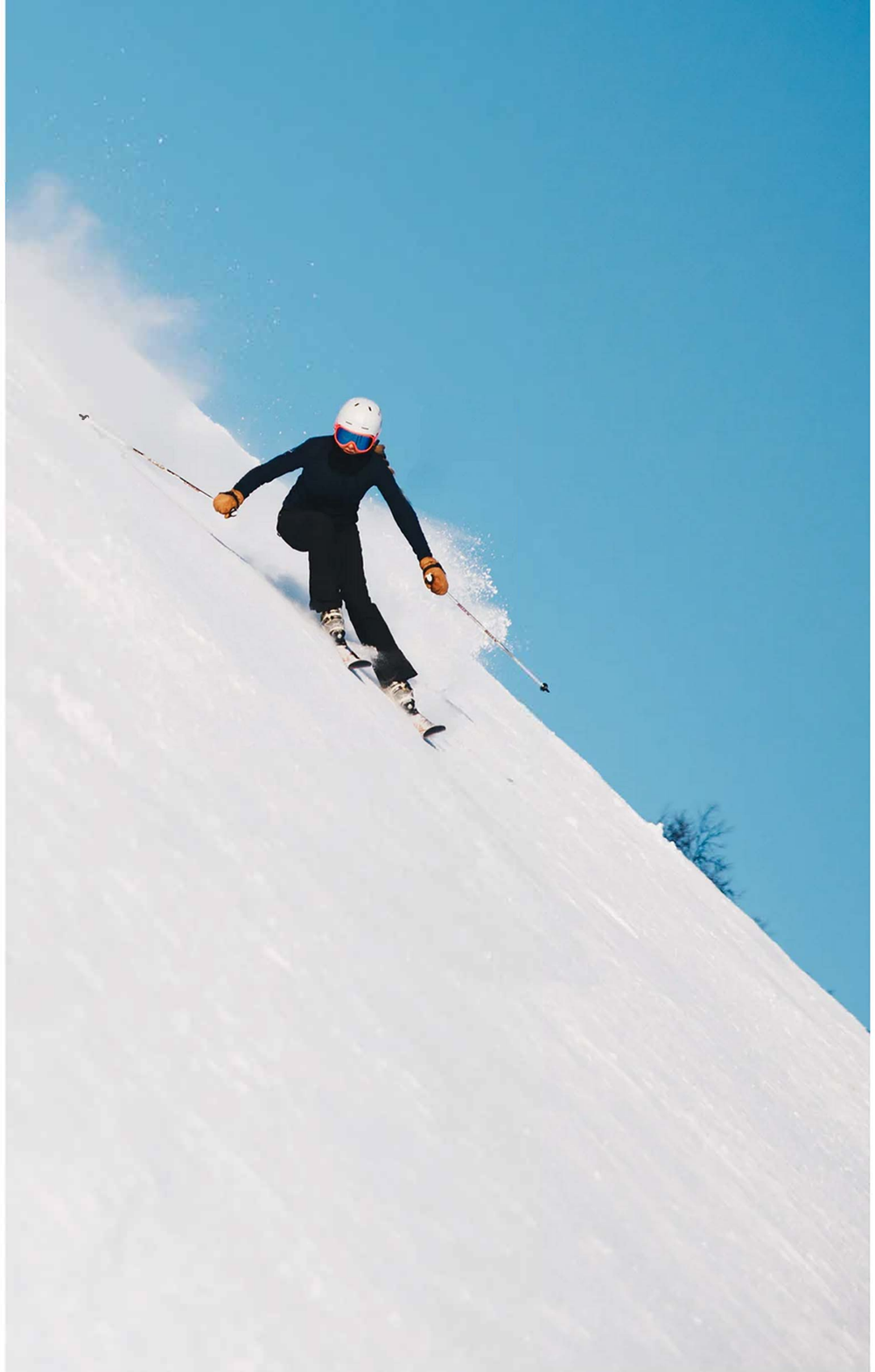}\q\q\q\q\q\q \q\q\q\ 
\includegraphics[scale = .6]{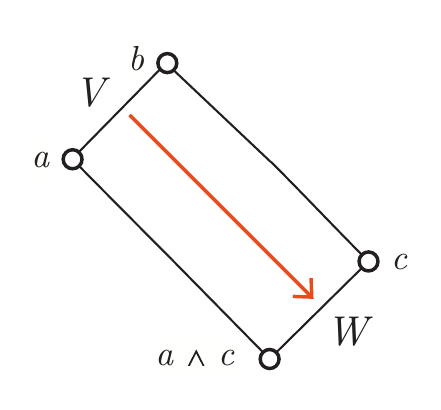}}
\caption{Ski down---down-perspectivity}
\label{F:downps}

\bigskip

\bigskip

\centerline{\includegraphics[scale = .25]{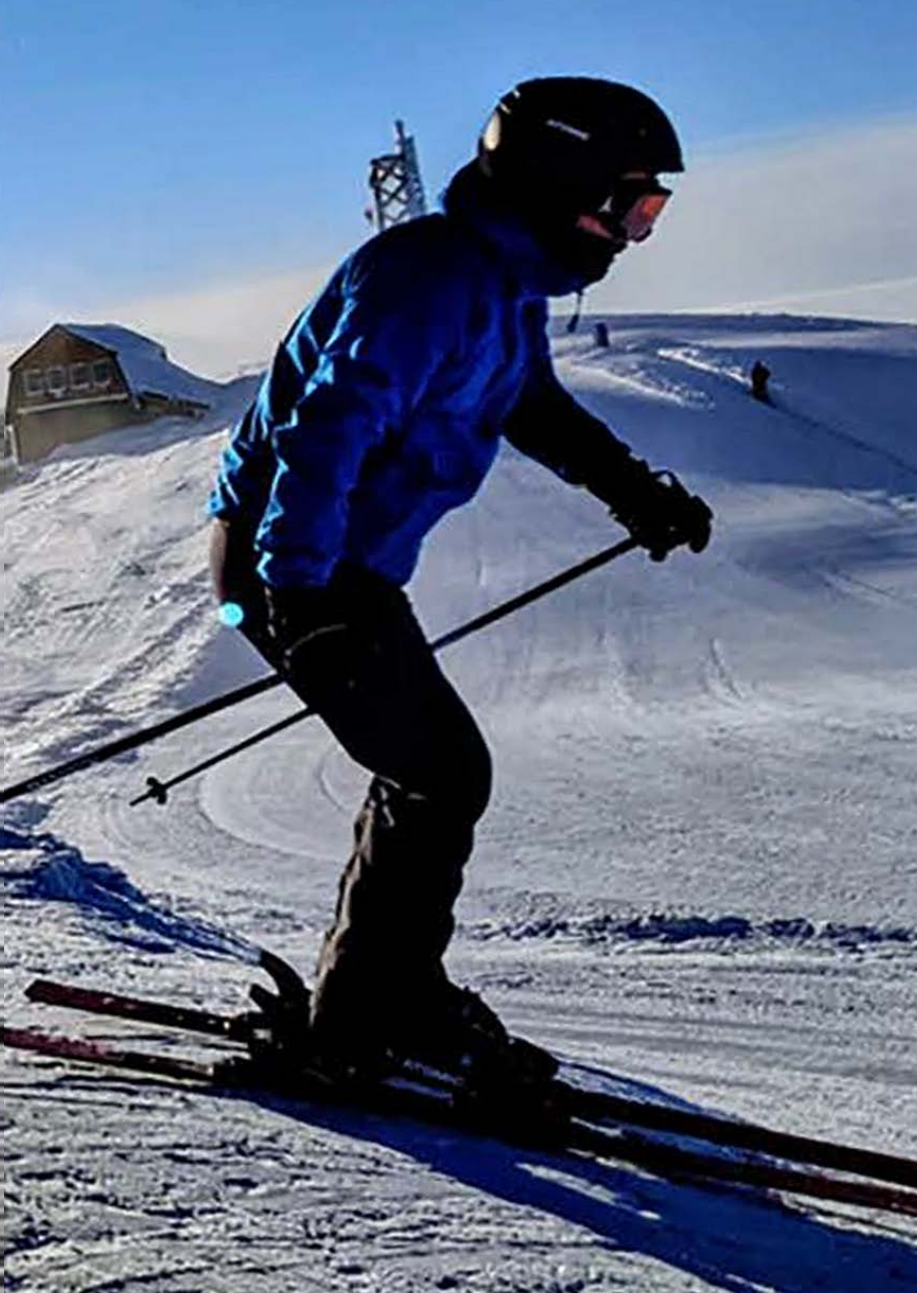}\q\q\q\q\q\q\includegraphics[scale = 1]{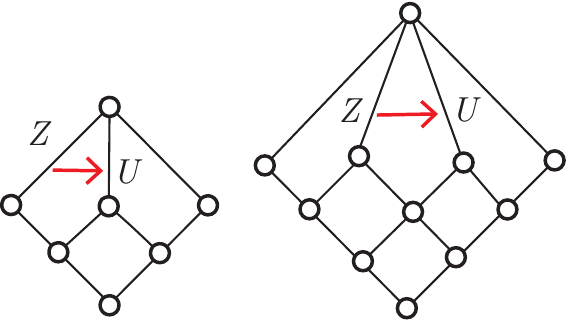} }
\caption{Turn---Swing}
\label{F:Swing}
\end{figure}

\subsection*{Natural diagrams for SR lattice}

We follow my joint paper with E. Knapp~\cite{GKn10} published in 2010.
For an SR lattice $L$,
let $\Chl L$ be the lower left and $\Chr L$ the lower right boundary chain of $L$, respectively.

We regard the grid $G = \Chl L \times \Chr L$ as a planar lattice, 
with $\Chl L = \Chl G$ and $\Chr L = \Chr G$. 
Then the map
\[
   \gy \colon  x \mapsto (x \mm \cornl L, x \mm \cornr L)
\]
is a meet-embedding of $L$ into $G$;
the map $\gy$ also preserves the bounds.
Therefore, the image of $L$ under $\gy$ in $G$ is a diagram of $L$;
we call it the \emph{natural diagram} representing $L$. 

\begin{figure}[t!]
\centerline{\includegraphics[scale=.7]{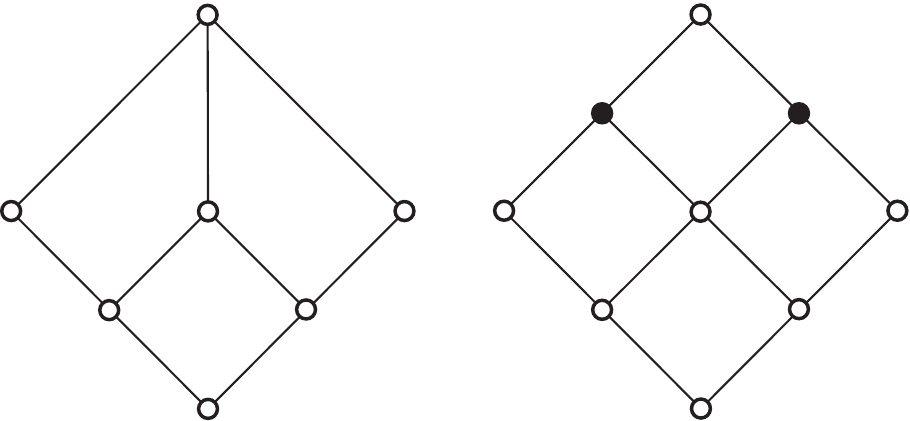}}
\caption{Obtaining the natural diagram of $\seven$}
\label{F:goldenratio}
\end{figure}
 
\subsection*{$\E C_1$-diagrams}%
\index{C1 diagram@${\E C}_1$-diagram}%
\index{Diagram!${\E C}_1$|(}%

This research tool, introduced by G. Cz\'edli in 2017,
has been playing an important role in recent papers
(see G. Cz\'edli \cite{gC17}--\cite{gCcc},
my joint paper with G.~Cz\'edli \cite{CG21}, and also G.~Gr\"atzer~\cite{gG21a};
for the definition, see G.~Cz\'edli \cite{gC17} and G.~Gr\"atzer~\cite{gG21a}).

In the diagram of an SR lattice $K$,\index{Lattice!slim rectangular (SR)}\index{Slim rectangular (SR) lattice}\index{Lattice!SR}\index{SR lattice}
a \emph{normal edge} (\emph{line}) has a slope of $45\degree$ or~$135\degree$.
Any edge (line) of slope strictly between $45\degree$ and $135\degree$ is \emph{steep}.
A \emph{normal edge} (\emph{line}) has a slope of $45\degree$ or $135\degree$.
A \emph{peek sublattice} in $K$ is an $\sseven$ sublattice whose top three does are covering.

\begin{figure}[htb]
\centerline{\includegraphics{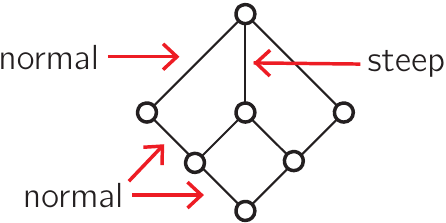}}
\caption{Normal and steep edges}
\label{F:arrows}
\end{figure}

\begin{definition}\index{C1 diagram@${\E C}_1$-diagram}\index{Diagram!${\E C}_1$}
A diagram of an SR lattice $L$ is a \emph{${\E C}_1$-diagram},
if the middle edge of a peek sublattice
is steep and all other edges are normal.
\end{definition}\index{Rectangular!lattice}\index{Lattice!rectangular}

\begin{theorem}\label{T:well}
Every SR lattice $L$ has a ${\E C}_1$-diagram.
\end{theorem}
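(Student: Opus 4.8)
The plan is to induct on the number of fork insertions needed to build $L$, using the natural diagram of the underlying grid as the base of the induction.

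For the base case, suppose $L$ is fork-free. Then $L$ is a grid, i.e., a direct product of two finite chains, and the natural diagram of $L$ introduced above already draws every edge with slope $45\degree$ or $135\degree$, so all edges are normal. Since $\sseven$ embeds into no distributive lattice, $L$ has no peek sublattice; hence both conditions defining a ${\E C}_1$-diagram hold (the first one vacuously), and the natural diagram of a grid is a ${\E C}_1$-diagram.

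For the inductive step I would invoke the Cz\'edli--Schmidt structure theorem: every SR lattice is obtained from a grid by a sequence of fork insertions, and a fork insertion preserves slimness, semimodularity, planarity, and rectangularity. So write $L$ as obtained from a smaller SR lattice $L'$ by inserting a fork into a covering square $S$ of $L'$, and fix a ${\E C}_1$-diagram $D'$ of $L'$ by the induction hypothesis. Performing the fork insertions in the order of the Cz\'edli--Schmidt construction, $S$ is a genuine covering square of $L'$, and no side of $S$ is the steep middle edge of an earlier peek sublattice (by its local structure, the steep edge of a peek $\sseven$ is not a side of any covering square); hence $S$ is drawn in $D'$ as an honest parallelogram with $45\degree/135\degree$ sides. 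There is then room inside the region bounded by $S$ to place the new elements of the inserted $\sseven$ so that the middle edge of this $\sseven$ is steep and all its other edges are normal. This modification is purely local: the diagram stays planar, every edge of $D'$ is untouched and hence still normal, and the inserted fork becomes a peek sublattice whose middle edge is steep.

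It remains to check that no spurious non-normal edge has been created and that no peek sublattice has been overlooked, and here the structure theory does the real work: in a slim semimodular lattice every cover-preserving $\sseven$ sublattice comes from an inserted fork, so the steep edges produced by the construction --- exactly one per fork insertion --- sit precisely at the middle edges of the peek sublattices of $L$, while every other edge has stayed normal. The resulting diagram is therefore a ${\E C}_1$-diagram of $L$. The hard part is the geometric bookkeeping in the inductive step: one must guarantee that a fork can always be inserted \emph{inside the parallelogram} $S$ without altering any slope outside it and without ever inserting into a square that an earlier insertion has already deformed. Ordering the insertions along the Cz\'edli--Schmidt construction (equivalently, respecting the order in which the forks are generated) settles the latter point; for the former, it helps that ``steep'' is an \emph{open} condition --- slope strictly between $45\degree$ and $135\degree$ --- so that a little slack may be carried at every step and small perturbations never do harm, which is exactly what makes the whole system of slope constraints simultaneously realizable.
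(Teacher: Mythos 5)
Your overall strategy---induct on the Cz\'edli--Schmidt fork decomposition, starting from the natural diagram of a grid---is a legitimate route and is close in spirit to Cz\'edli's original argument; note, though, that this survey does not prove Theorem~\ref{T:well} at all but cites \cite{gC17} and \cite{gG21a} for it, and that Theorem~\ref{T:C1=natural} points to a slicker, non-inductive proof: the natural diagram $x \mapsto (x \mm \cornl L, x \mm \cornr L)$ exists for every SR lattice, and one verifies directly that it is a ${\E C}_1$-diagram. Your base case is fine. The inductive step, however, has two genuine gaps.

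First, inserting a fork into a 4-cell $S$ is \emph{not} a local modification confined to the region bounded by $S$. Besides the new element under the top of $S$, the fork has two legs of new elements descending to the lower-left and lower-right, bisecting a whole staircase of existing 4-cells until they reach the lower boundary chains. Your sketch never places these leg elements or checks that the many new edges they create are normal; this is most of the geometric content of the construction. Second, your claim that ``the steep edge of a peek $\sseven$ is not a side of any covering square'' is false: in $\sseven$ the middle (steep) edge is a side of two covering squares, and in a slim semimodular lattice every covering square is a 4-cell, so a later fork can perfectly well be inserted into a 4-cell having a steep side. Hence you cannot assume that $S$ is drawn as a $45\degree/135\degree$ parallelogram, and the ``open condition plus slack'' remark does not rescue this: when a lower edge of $S$ is already steep, the positions of the new middle element and of the first leg elements are genuinely constrained, and one must argue that the required slopes are still realizable. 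This is precisely the case that makes the published proofs nontrivial (or that the natural-diagram approach of \cite{gG22a} sidesteps by defining all coordinates at once rather than incrementally).
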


This was proved in G. Cz\'edli \cite{gC17}.
My note \cite{gG21a} presents a short and direct proof.


It turns out that the two approaches to nice diagrams of SR latices are the same
(see my paper \cite{gG22a}).
 
\begin{theorem}\label{T:C1=natural}
Let $L$ be a SR lattice. 
Then a natural diagram of $L$ is a~${\E C}_1$-diagram. 
Conversely, every ${\E C}_1$-diagram is natural.
\end{theorem}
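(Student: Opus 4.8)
The plan is to pin down, for an SR lattice $L$, what the natural diagram actually looks like geometrically, and then verify directly that this geometric description coincides with the defining property of a $\E C_1$-diagram (steep middle edges of peek sublattices, all other edges normal). The natural diagram embeds $L$ into the grid $G=\Chl L\times\Chr L$ via $\gy\colon x\mapsto(x\mm\cornl L,\,x\mm\cornr L)$, so each element sits at an integer lattice point once we parametrize the two boundary chains by their lengths. First I would record the basic geometric facts: a covering $a\prec b$ in $L$ maps to a segment in $G$ that moves one step in exactly one of the two coordinates unless it is one of the ``fork'' steps where both coordinates change, and semimodularity forces the latter to be exactly the situation of a peek ($\sseven$) configuration. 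Thus every edge of the natural diagram is either a $45\degree$ edge, a $135\degree$ edge, or a ``doubly stepping'' edge — and I would argue the doubly stepping edges are precisely the middle edges of peek sublattices, with slope strictly between $45\degree$ and $135\degree$, i.e.\ steep.

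The core of the first direction (``natural $\Rightarrow$ $\E C_1$'') is therefore: (1) show that for $a\prec b$ in $L$ that is \emph{not} the middle edge of a peek, exactly one of $a\mm\cornl L\prec b\mm\cornl L$, $a\mm\cornr L\prec b\mm\cornr L$ holds and the other is an equality, so $\gy(a)$ and $\gy(b)$ differ in one coordinate by one unit and the edge is normal; (2) show that the middle edge of a peek sublattice changes \emph{both} coordinates, hence has slope strictly between $45\degree$ and $135\degree$, hence is steep. For (1) I would use the structure theory of slim semimodular lattices — that slim rectangular lattices are obtained from a grid by repeatedly inserting forks (Czédli–Schmidt), so the only place where $\gy$ fails to be cover-preserving into the grid is at an inserted fork, which is exactly a peek. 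For (2) one checks on the $\sseven$ that the top element is the join of the two outer lower covers while the middle lower cover is strictly below that join, forcing its two meet-coordinates with $\cornl L,\cornr L$ to both drop.

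For the converse (``$\E C_1$ $\Rightarrow$ natural'') I would argue uniqueness: by Theorem~\ref{T:well} every SR lattice has a $\E C_1$-diagram, and I would show a $\E C_1$-diagram of $L$ is determined up to the obvious reparametrization by the combinatorial data of $L$ (the two boundary chains and the positions of the forks), because the normal edges force each element's position relative to the two boundary chains, and the steep middle edges are the only freedom, which is itself pinned down by its two endpoints. Since the natural diagram is \emph{a} $\E C_1$-diagram by the first part, and any two $\E C_1$-diagrams of $L$ agree, every $\E C_1$-diagram is natural. Alternatively, and perhaps more cleanly, I would show directly that in any $\E C_1$-diagram the coordinates of an element $x$ read off from the grid must equal $(x\mm\cornl L,\,x\mm\cornr L)$: the normal edges propagate the grid coordinates along non-fork covers exactly as $\gy$ does, and one checks the steep edges are consistent with this.

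The main obstacle I expect is step (1) combined with the converse's uniqueness claim: making precise that ``the natural diagram fails to be cover-preserving into the grid exactly at the fork/peek edges'' requires invoking the fork-insertion description of SR lattices and checking that a fork insertion produces exactly one new peek whose middle edge is the only steep edge introduced — and conversely that every peek of $L$ arises this way. Managing the bookkeeping of how successive fork insertions interact (so that an edge that was normal does not later become the middle edge of a new peek, etc.) is the delicate point; everything else is a routine check on the small lattices $\seven$/$\sseven$ and on the meet-embedding $\gy$.
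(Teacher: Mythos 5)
The survey itself contains no proof of Theorem~\ref{T:C1=natural} --- it only states the result and points to \cite{gG22a} --- so I can only judge your proposal on its own terms. Your architecture (compute what $\gy$ does to each covering edge, identify the steep edges of the natural diagram with the middle edges of peek sublattices, then get the converse from a rigidity statement for ${\E C}_1$-diagrams) is the right one and is very likely the route of the cited paper. But two of your key steps are not actually argued. In step (1), the claim that $\gy(a)$ and $\gy(b)$ ``differ in one coordinate by one unit'' is already false in $\seven$: the edge from the left corner to the unit fixes the left coordinate and moves the right coordinate by \emph{two} steps of $\Chr L$. This is harmless (normality only requires slope $45\degree$ or $135\degree$, i.e., that exactly one coordinate moves), but it shows the edge-by-edge computation has not really been done. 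More seriously, your justification of step (2) --- the middle lower cover is ``strictly below the join of the two outer lower covers, forcing both meet-coordinates to drop'' --- proves nothing: \emph{every} lower cover of the top of the peek, including the two outer ones whose edges must come out normal, is strictly below that join. The genuine content of the first half is precisely why being an \emph{internal} lower cover forces both of $x\mm\cornl L$ and $x\mm\cornr L$ to decrease, and that argument is missing.

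For the converse, you lean on the assertion that a ${\E C}_1$-diagram of $L$ is unique up to reparametrization. That is itself a nontrivial theorem (due to Cz\'edli; Theorem~\ref{T:well} gives only existence), and your one-sentence reason --- ``the normal edges force each element's position'' --- presupposes that every element is tied to the two lower boundary chains by normal edges in a way that determines both coordinates. Your alternative, reading $(x\mm\cornl L,\,x\mm\cornr L)$ off any ${\E C}_1$-diagram, is not a routine check either; it is essentially the whole converse restated. You rightly flag the fork-insertion bookkeeping as the delicate point, but as written the proposal is an accurate table of contents for a proof rather than a proof: the steepness claim at peeks and the ${\E C}_1$-rigidity claim are exactly the two places where the real work lives, and neither is carried out.
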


\subsection{Lamps}\label{S:Lamps}

Let $L$ be a SR lattice. On the edges of $L$ with meet-irreducible bottoms,
the internal swing, $\fp \inswing \fq$, is a reflexive, symmetric, and transitive binary relation.
So these edges are partitioned into equivalence classes; 
G. Cz\'edli \cite{gC21} calls them \emph{lamps}.

The concept of a lamp is a crucial tool for the discovery 
of most deep properties of congruence lattices of SPS lattices.
It is a very deep and technical concept, 
and it will await Cz\'edli to write an introduction to the field in the style of this paper.

G. Cz\'edli~\cite{gCce} also utilizes lamps to find an infinite family 
of independent properties of congruence lattices of SPS lattices. 

There are a number of ways to view how congruences spread in SP lattices. 
The first approach uses trajectories, 
see G. Cz\'edli~\cite{gC14} and my paper \cite{gG18}.
The second is the Swing Lemma. The third is lamps. 
They are very similar in some ways, but quite different technically.

\section{Four major properties}\label{S:major}

This section introduces four major properties of congruences of SR lattices,
as introduced in G. Cz\'edli \cite{gC21}.

Let $K$ be a slim, planar, semimodular  lattice 
with at least three elements and let~$\E P$ be
the ordered set of join-irreducible congruences of $K$.

\subsection{The No Child Property}

See G. Cz\'edli \cite{gC21}, illustrated in Figure~\ref{F:child}.

\begin{theorem}[No Child Property]
Let $x \neq y \in \E P$ and let $z$ be a maximal element of $\E P$. 
Let us assume that both $x$ and $y$ are covered by $z$ in $\E P$. 
Then there is no element $u \in \E P$ such that $u$ is covered by $x$ and $y$.
\end{theorem}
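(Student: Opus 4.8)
The plan is to work with a $\mathcal{C}_1$-diagram of the SPS lattice $K$ (available by Theorem~\ref{T:well}; for the non-rectangular case one first passes to a rectangular extension, or argues directly that it suffices to treat SR lattices since the congruence lattice is unchanged under the standard rectangular hull construction). I~would then translate the hypothesis into the language of lamps and swings. Fix edges $\fp_x,\fp_y,\fp_z,\fp_u$ of $K$ whose congruences are, respectively, $x,y,z,u$; since $z$ is maximal in $\E P$, the congruence $\con{\fp_z}$ is join-irreducible and maximal, so by the Swing Lemma the edges in its color class are exactly those reachable from $\fp_z$ by climbing, then alternately sliding and swinging, and the chain of upper endpoints in \eqref{E:geq} is weakly descending. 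The relations $x\prec z$ and $y\prec z$ in $\E P$ mean that $\con{\fp_x}$ and $\con{\fp_y}$ are covered by $\con{\fp_z}$; geometrically this forces $\fp_x$ and $\fp_y$ to sit immediately below $\fp_z$ in a very constrained way, typically as the outer edges of a cover-preserving $\sseven$ (a peek sublattice) whose hinge is the top of $\fp_z$.

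The key steps, in order, are: (1)~normalize to an SR lattice with a $\mathcal{C}_1$-diagram and record, for each of $x,y,z,u$, a representing edge and its lamp; (2)~use maximality of $z$ together with the Swing Lemma to pin down the local picture around the hinge of $z$—show that $x$ and $y$ are represented by two distinct edges swinging from (or perspective into) the same hinge, so that $\fp_x$ and $\fp_y$ are the left-most and right-most lower edges at that hinge, or lie on opposite sides of it; (3)~assume for contradiction an edge $\fp_u$ with $\con{\fp_u}\prec\con{\fp_x}$ and $\con{\fp_u}\prec\con{\fp_y}$, i.e.\ $u$ is a common lower cover of $x$ and $y$; (4)~run the Swing Lemma downward from $\fp_x$ and from $\fp_y$ to reach $\fp_u$, obtaining two descending trajectories that both terminate at the color class of $u$; (5)~derive a geometric contradiction from the $\mathcal{C}_1$-normalization—the two trajectories, being forced through the same hinge configuration from opposite sides, would have to meet in a way that either collapses $x=y$, contradicts $u\prec x$ (e.g.\ produces $u=x$ or $x<u$), or violates the steep/normal edge discipline of the $\mathcal{C}_1$-diagram. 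A~clean way to package (5) is via Cz\'edli's lamp machinery: a common child $u$ of two distinct children $x,y$ of a maximal $z$ would force the lamp of $u$ to be simultaneously ``below'' two lamps that are themselves children of a single maximal lamp, and the geometry of lamps in an SR lattice (their bodies are intervals of a fixed edge, and the ``visibility'' order among lamps is planar) forbids this.

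The main obstacle I~expect is step~(5): turning the combinatorial impossibility into an actual planar/geometric contradiction without simply re-deriving the whole lamp theory. The delicate point is that ``$u$ is covered by both $x$ and $y$'' is a statement about the \emph{covering} relation in $\E P$, not just the order, so one cannot merely produce \emph{some} congruence below both—one must show no congruence sits \emph{immediately} below both. This requires controlling not only where the downward trajectories from $\fp_x$ and $\fp_y$ can go, but also ruling out intermediate congruences, which is exactly where the finer structure of $\mathcal{C}_1$-diagrams (equivalently, natural diagrams, by Theorem~\ref{T:C1=natural}) and the transitivity of internal swing within a lamp do the real work. I~would therefore expect the honest proof to cite Cz\'edli's lamp results as black boxes and spend its effort on the local configuration analysis around the hinge of the maximal element~$z$.
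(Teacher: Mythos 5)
The paper offers no proof of the No Child Property to compare you against: the theorem is stated with a bare citation to G.~Cz\'edli \cite{gC21}, and Section~\ref{S:proofs} sketches only the Partition and Two-pendant Four-crown Properties. Measured against the toolkit the paper actually demonstrates, your steps (1)--(4) are a reasonable opening: pass to an SR lattice with a $\mathcal{C}_1$-diagram, represent each covering in $\E P$ by a local configuration of normal and steep edges, and look for a clash. Indeed, the one piece of your plan that is solidly grounded is step (3)--(4) applied to $u\prec x,y$: a common \emph{lower} cover is witnessed, as in the paper's Four-crown argument, by a single peak sublattice $\SfS 7$ whose middle steep edge has color $u$ and whose two upper normal edges receive $X$ and $Y$ by down-perspectivity.

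The proposal nevertheless has two genuine gaps. First, step (2) is unjustified: $x\prec z$ and $y\prec z$ is a common \emph{upper} cover, and nothing in the Swing Lemma localizes its two witnesses to a single hinge. The color $z$ may label many edges, and the sequences realizing $\con{\fp_x}<\con{\fp_z}$ and $\con{\fp_y}<\con{\fp_z}$ may start from different edges of color $z$ in different parts of the diagram, so your picture of $\fp_x$ and $\fp_y$ as ``the left-most and right-most lower edges at that hinge'' is an assumption, not a consequence; the subsequent claim in step (5) that the two downward trajectories are ``forced through the same hinge configuration from opposite sides'' inherits this defect. Second, step (5) itself --- the only step that is not mere translation --- is never carried out: you list three contradictions that ``would have to'' arise without exhibiting the excluded configuration, and your fallback of citing ``Cz\'edli's lamp results as black boxes'' is circular here, since the No Child Property is precisely one of the four lamp-theoretic results of \cite{gC21}. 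To turn this into a proof you would need to start from the one configuration you do control (the peak sublattice witnessing $u\prec x,y$), follow its two upper edges up to the boundary where the maximal color $z$ must live, and show that both arriving at the single color $z$ forces $X$ and $Y$ into the same trajectory, hence $x=y$; as it stands, that argument is absent.
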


\begin{figure}[htb]
\centerline{\includegraphics{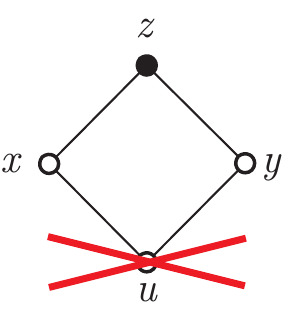}}
\caption{No Child Property}
\label{F:child}
\end{figure} 

\subsection{The Four-crown Two-pendant Property}

See G. Cz\'edli \cite{gC21} and illustrated in Figure~\ref{F:4crown}.

\begin{theorem}[Four-crown Two-pendant Property]
There is no cover-preserving embedding of the ordered set $\E R$ of Figure~\ref{F:4crown}
into $\E P$ satisfying the property that any maximal element of~$\E R$
maps into a~maximal element of $\E P$.
\end{theorem}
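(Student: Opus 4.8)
The plan is to argue by contradiction, exploiting the lamp machinery of Cz\'edli \cite{gC21} together with the Swing Lemma (Lemma~\ref{L:SPSproj}). Suppose $K$ is an SR lattice (by Theorem~\ref{T:C1=natural} we may assume $K$ is drawn as a ${\E C}_1$-diagram) and that $\gf\colon \E R \to \E P = \Ji{\Con K}$ is a cover-preserving embedding carrying maximal elements of $\E R$ to maximal elements of $\E P$. Translate each element of $\gf(\E R)$ into the lamp that realizes it: the two pendant elements and the four ``crown'' elements of $\E R$ each correspond to a lamp of $K$, hence to a geometric region of the ${\E C}_1$-diagram with a distinguished source edge. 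The covering relations in $\E R$ become the ``foot/source'' incidences between lamps, and maximality of the pendants in $\E P$ means the corresponding lamps are not dominated by any further lamp.

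First I would record the combinatorial shape of $\E R$ explicitly: a $4$-crown (an alternating cycle $a_1 < b_1 > a_2 < b_2 > a_3 < b_3 > a_4 < b_4 > a_1$ on six or eight elements, as in Figure~\ref{F:4crown}) together with two pendant elements sitting above a matched pair of the $b_i$'s, and note which elements the hypothesis forces to be maximal. Then I would convert each covering $a_i \prec b_j$ into the geometric statement that the lamp $\Lambda_{b_j}$ ``casts light on'' the source edge of $\Lambda_{a_i}$, i.e.\ the foot of $\Lambda_{b_j}$ lies on the trajectory of $\Lambda_{a_i}$; here the left-to-right coordinate of a lamp's source edge in the ${\E C}_1$-diagram gives a total order on the relevant lamps along each ``column,'' and a swing can only move strictly inward. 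The key structural input is that in an SR lattice a lamp that is maximal (as a join-irreducible congruence) must be a ``full-length'' trajectory reaching a boundary, so its source edge has an extreme horizontal position; two maximal lamps plus the alternating up-down pattern of the crown then force contradictory inequalities among these horizontal coordinates.

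Concretely, the heart of the argument is a planarity/monotonicity count: trace the four crown lamps around the cycle, each step either nesting one source-edge-interval inside another (going up to a $b$) or the reverse (going down to an $a$); because the diagram is planar and swings are strictly internal, after going once around the $4$-cycle the horizontal position of the starting source edge must have strictly decreased, which is impossible. The two pendants, required to be maximal, pin down two of the $b_i$ lamps as boundary-reaching trajectories, which is what rules out the escape route where the crown could ``wrap around'' through the outer boundary. This is essentially the technique Cz\'edli uses for the No Child Property, run on a longer cycle.

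The main obstacle I anticipate is making the ``horizontal coordinate strictly decreases around the cycle'' step fully rigorous: one must show that the foot-of-one-lamp-on-trajectory-of-another relation behaves like a strict order on each side of the crown, and that the alternation of directions in a $4$-crown is incompatible with this, \emph{while} correctly using the hypothesis that pendants (hence two designated crown-tops) are maximal — without that hypothesis the statement is false, so the proof must genuinely consume it. Handling the degenerate cases — coincident feet, lamps of length one, or a crown element whose lamp touches the same boundary doubly-irreducible element $\cornl K$ or $\cornr K$ — will require a short separate check, but these are routine once the main monotonicity lemma about lamps in a ${\E C}_1$-diagram is in place.
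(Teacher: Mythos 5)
Your plan diverges from the paper's proof and, more importantly, contains a genuine gap. First, you have guessed the shape of $\E R$ incorrectly: in Figure~\ref{F:4crown} the four crown tops $a,b,c,d$ are the maximal elements of $\E R$ (these are what the hypothesis sends to maximal join-irreducible congruences), the crown bottoms $p,q,r,s$ each lie below two of them, and the two pendants $u,v$ hang \emph{below} disjoint pairs of the crown bottoms ($u \prec p,r$ and $v \prec q,s$); they do not sit above anything and they are not maximal. Since you yourself note that the proof must ``genuinely consume'' the pendants, building the argument on the premise that the pendants are maximal, boundary-reaching lamps starts from the wrong configuration.

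Second, the heart of your argument --- that tracing the four crown lamps once around the $4$-cycle forces a strict decrease of a horizontal coordinate --- would, if it worked, already forbid the bare four-crown with maximal tops, with no pendants needed. That is too strong: the Partition Property only requires the ``common lower cover'' graph on the maximal elements to be bipartite, and a $4$-cycle is bipartite, so nothing in the known machinery (nor in this theorem, which explicitly includes the pendants) rules out the bare crown. So the monotonicity lemma you defer as ``the main obstacle'' is not merely unproved; in the form you state it, it cannot hold. The paper's actual route is different and shorter: Lemma~\ref{L:disjoint} shows that two distinct edges on the upper-left boundary admit no common lower cover in $\E P$, which forces $A,C$ onto the upper-left and $B,D$ onto the upper-right boundary; then each of the coverings coming from $p,q,r,s,u,v$ is realized by the middle (steep) edge of a peak sublattice, and the pendant $v \prec q,s$ forces a single edge $V$ to serve as the middle edge of two distinct peak sublattices --- a contradiction. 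If you want to salvage your approach, the place to look is how the two pendants, not the crown cycle alone, create the geometric impossibility.
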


\begin{figure}[htb]
\centerline{\includegraphics{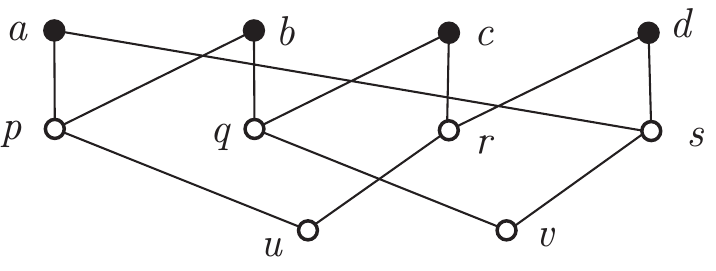}}
\caption{The ordered set $\E R$}
\label{F:4crown}
\end{figure}

\subsection{The Partition Property}

See G. Cz\'edli \cite{gC21}.

\begin{theorem}[Partition Property]
The set of maximal elements of $\E P$ can be partitioned 
into two nonempty subsets such that no two distinct elements 
in the same subset have a common lower cover.
\end{theorem}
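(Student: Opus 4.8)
The plan is to deduce the Partition Property from the Two-max Theorem together with the No Child Property and the Four-crown Two-pendant Property, viewing the statement as a $2$-colorability assertion. Define an auxiliary graph $\E G$ on the vertex set $M$ of maximal elements of $\E P$, joining $m_1, m_2 \in M$ by an edge exactly when they have a common lower cover in $\E P$. What we must show is that $\E G$ is bipartite, i.e. has a proper $2$-coloring with both color classes nonempty. By the Two-max Theorem, $|M| \geq 2$, so once bipartiteness is established it is automatic that we can arrange both parts to be nonempty (if $\E G$ is edgeless, split $M$ arbitrarily; if it has an edge, the two endpoints already lie in different parts). So the whole content is: \emph{$\E G$ contains no odd cycle.}

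First I would reduce to SR lattices. Since $K$ is a slim, planar, semimodular lattice with at least three elements, standard structure theory (as used throughout the cited work of Gr\"atzer--Knapp and Cz\'edli) lets us embed $K$ cover-preservingly into, or otherwise reduce the congruence-lattice question to, a slim \emph{rectangular} lattice; the four major properties are all stated for SR lattices precisely because of this reduction, so I would invoke it and assume $K$ is SR, bringing the lamp machinery of Section~\ref{S:Lamps} into play. Next I would rule out short odd cycles by hand. A $3$-cycle $m_1, m_2, m_3$ in $\E G$ would give three distinct maximal elements pairwise sharing lower covers; I expect to derive a contradiction with the No Child Property (three maximal elements sitting over a configuration of covers forces some element covered by two of the $m_i$'s to have a further common lower cover, which No Child forbids), possibly after also using the Two-Cover Condition to bound how many lower covers a single maximal element can have.

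The main work, and the main obstacle, is excluding \emph{longer} odd cycles. Here the Four-crown Two-pendant Property is the designed tool: a cycle $m_1, c_1, m_2, c_2, \dots, m_k, c_k, m_1$ (alternating maximal elements $m_i$ and shared lower covers $c_i$) is exactly a "crown" in $\E P$ all of whose maximal vertices map to maximal elements of $\E P$, and the Four-crown property kills the length-$4$ crown under precisely this maximality-preservation hypothesis. The difficulty is that Four-crown only directly addresses the four-element crown $\E R$, so for a general odd cycle of length $2k+1$ I would argue by induction on $k$: in a minimal odd cycle, use the Two-Cover Condition (each $m_i$ has at most two lower covers, hence at most two incident cycle-edges, so the cycle is "tight") plus a planarity/left-right ordering argument on the lamps to locate a chord or a sub-crown of length $4$, contradicting either minimality or the Four-crown property. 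Verifying that such a chord or sub-crown always exists — translating the combinatorics of the cycle in $\E P$ back into the geometry of lamps and trajectories in the SR diagram — is where I expect the real effort to lie, and it is the step I would present via a Proof-by-Picture in the style of this survey rather than in full generality.
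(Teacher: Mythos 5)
Your reformulation of the statement as bipartiteness of the graph $\E G$ on the maximal elements of $\E P$ (edges given by common lower covers), with the Two-max Theorem supplying two nonempty classes, is correct as far as it goes. The gap is in the main step: the properties you propose to invoke simply do not exclude odd cycles. Already a $3$-cycle in $\E G$ is a hexagonal three-crown: maximal elements $m_1,m_2,m_3$ together with $c_{12},c_{13},c_{23}$, where $c_{ij}\prec m_i,m_j$ and nothing else. As an abstract ordered set this satisfies the Two-Cover Condition, the Two-max Condition, the Maximal Cover Property, the No Child Property (there is no element lying below two of the $c_{ij}$, so the hypothesis of No Child is never completed --- your hope that the configuration ``forces'' such an element is unfounded), and the Four-crown Two-pendant Property (which forbids only a specific ten-element configuration, a four-crown carrying two pendants, and a ten-element set cannot embed in a six-element one); yet it visibly violates the Partition Property. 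The Three-pendant Three-crown Property of Section~\ref{S:3P3C} does not help either, since it forbids the three-crown only when it carries three additional pendants. So the Partition Property is independent of the other listed properties, and no induction on cycle length starting from No Child and Four-crown can establish it; the ``real effort'' you defer to the end is not a technical verification but the entire proof.

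The paper's argument is direct and constructive and does not pass through the other properties at all. Working with a ${\E C}_1$-diagram of the (reduced-to-)SR lattice $K$, the maximal elements of $\E P$ are exactly the colors of the edges on the upper boundary of $K$, and the explicit bipartition is: colors of upper-left boundary edges versus colors of upper-right boundary edges. The whole content is then Lemma~\ref{L:disjoint}: two distinct edges $X,Y$ on the upper-left boundary admit no edge $Z$ with $\col Z\prec\col X,\col Y$. This is proved by tracing the congruence via the Swing Lemma, $X \perspdn S_X \exswing T_X$ with $S_X$ normal-up and $T_X$ steep, and observing that a common trajectory forces $T_X=T_Y$ and hence $X=Y$. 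If you want to salvage your plan, abandon the graph-theoretic induction and prove this boundary lemma directly; the left--right geometry of the diagram, not the combinatorics of $\E P$, is what carries the proof.
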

 
\subsection{The Maximal Cover Property}
 
 See G. Cz\'edli \cite{gC21}.

\begin{theorem}[Maximal Cover Propery]
If $x \in \E P$ is covered by a maximal element $y \in \E P$, 
then $y$ is not the only cover of $x$.
\end{theorem}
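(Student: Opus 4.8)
The plan is to work inside a fixed $\E C_1$-diagram of $K$ (available by Theorem~\ref{T:well}), translate the hypothesis into a statement about edges and lamps, and then argue geometrically. First I would pick prime intervals $\fp, \fq$ in $K$ with $x = \con{\fp}$ and $y = \con{\fq}$; since $y$ is maximal in $\E P$, a standard fact about SPS lattices (every maximal join-irreducible congruence is generated by an edge on the boundary, or more precisely by an edge whose associated lamp is ``top-most'') lets me normalize $\fq$ to a well-placed edge. The relation $x \prec y$ means, via the Swing Lemma (Lemma~\ref{L:SPSproj}), that there is a one-step reduction from some up-perspective image of $\fq$ down to $\fp$: either a down-perspectivity or a single swing with hinge covering at least three elements. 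So the configuration near the hinge is an $\sseven$-type (or wider fan) sublattice, and $x$ sits ``below'' $y$ precisely through that fan.

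The key step is then to exhibit a \emph{second} cover of $x$ in $\E P$, i.e.\ a join-irreducible congruence $y' \neq y$ with $x \prec y'$. The idea is that at the hinge $h = 1_\fp = 1_\fq$, since $h$ covers at least three elements, there is another edge $\fq'$ with $1_{\fq'} = h$ lying on the opposite side, and $\fp$ swings to $\fq'$ as well; the congruence $y' = \con{\fq'}$ is then a cover of $x$ by the Swing Lemma applied in reverse. The remaining work is to verify $y' \neq y$ and that nothing collapses $y'$ into $y$ — this is where one uses that swing-classes on meet-irreducible-bottom edges are exactly the lamps of Cz\'edli~\cite{gC21}, so distinct lamps give distinct congruences, and the maximality of $y$ forces $\fq$ and $\fq'$ into different lamps (if they were in the same lamp, $y$ would not be maximal, or $x$ would equal $y$). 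I expect this separation argument — showing the ``sibling'' edge $\fq'$ genuinely generates a new cover and is not absorbed — to be the main obstacle, and it is exactly the point where the lamp machinery, rather than just the Swing Lemma, is needed.

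An alternative, perhaps cleaner, route is to derive the Maximal Cover Property as a corollary of the already-stated Partition Property together with the No Child Property. Indeed, suppose $x$ is covered \emph{only} by the maximal element $y$. One can try to build a configuration contradicting the Partition Property: if $y$ were the unique cover of $x$, then in any two-block partition of the maximal elements, the block containing $y$ together with the lower-cover structure at $x$ would be forced, and by iterating one manufactures two maximal elements in the same block sharing the common lower cover $x$. The hard part in this approach is bookkeeping: ensuring the partition conflict is unavoidable rather than circumventable by a clever choice of blocks, which again reduces to a local analysis of the fan above $x$ in the $\E C_1$-diagram. I would present the direct Swing-Lemma/lamp proof as primary and remark on the deduction from the Partition Property as an aside.
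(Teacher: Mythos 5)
A preliminary remark: the paper itself does not prove this theorem --- Section~\ref{S:proofs} sketches only the Partition and Two-pendant Four-crown Properties and refers the Maximal Cover Property to Cz\'edli~\cite{gC21} --- so your proposal has to be judged on its own terms and against the style of the arguments the paper does give.

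Your first route begins in the right place (localize the covering $x \prec y$ at a hinge via the Swing Lemma), but the decisive step has the swing pointing the wrong way. In the Swing Lemma the sequence \eqref{Eq:sequence} runs from the edge generating the \emph{larger} congruence toward the smaller one, so ``$\fp$ swings to $\fq'$'' yields $\con{\fq'} \leq \con{\fp} = x$: the sibling edge you produce generates something \emph{below} $x$, not a second cover of $x$. What the argument needs is the opposite configuration: take a shortest Swing Lemma sequence from (an up-perspective image of) $\fq$ down to $\fp$; since every $\con{\fr_i}$ is a join-irreducible congruence between $x$ and $y$ and these form a covering pair, there is exactly one step at which the congruence drops, and that step must be an external swing $\fr_i \exswing \fr_{i+1}$ with $0_{\fr_i}$ extreme under the hinge (down-perspectivities and internal swings preserve the congruence). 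The \emph{other} extreme edge under that hinge also swings to $\fr_{i+1}$ and hence generates a join-irreducible congruence $\geq x$; the entire content of the theorem is that this congruence is neither $x$, nor $y$, nor above $y$, and that it supplies a cover of $x$ different from $y$. You flag exactly this as ``the main obstacle'' and defer it to the lamp machinery as a black box, but that deferral \emph{is} the theorem; an argument in the style of Lemma~\ref{L:disjoint} (the two upper edges of a peak sublattice travel to the two different upper boundaries) is what has to be supplied. Also note that you may not assume $1_\fp = 1_\fq$: the edge realizing $y$ need not share its top with $\fp$, which is precisely why one must first walk along \eqref{Eq:sequence} to find the hinge.

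Your alternative route cannot be repaired. The Maximal Cover Property does not follow from the Partition and No Child Properties, nor from all the other stated properties combined: the three-element ordered set consisting of $x \prec y$ together with a maximal element $z$ incomparable to both satisfies the Two-Cover, Two-max, No Child, Four-crown Two-pendant, and Partition Properties --- for the last, put $y$ and $z$ in different blocks --- yet $x$ has the maximal element $y$ as its unique cover. So no amount of bookkeeping deduces the Maximal Cover Property from the others; a lattice-level argument is unavoidable.
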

\subsection{Translation}
 “You pays your money and you takes your choice.”
 Aldous Huxley, Brave New World
 
 Cz\'edli and I use different names for some of these properties.  
Translation:\\
\  \\ \ \\
\tbf{Cz\'edli}\q\q\q\q	\q\q\q\q\q\q\q\q\q\ \		\ \	\tbf{Gr\"atzer}\\
\ \\
Forbidden Marriage Property\q\q\q\q\      		\,	No Child Property\\
Dioecious Maximal Elements Property\q                 Maximal Cover Property\\
Bipartite Maximal Elements Property\q	\   		Partition Property\\

 \section{The 3P3C Property}\label{S:3P3C}

The last major property we discuss is in G. Cz\'edli and G. Gr\"atzer~\cite{gG22}. 

\begin{theorem}[Three-pendant Three-crown Property]
The ordered set of Figure~\ref{F:3P3C}
has no cover presering embedding into $\E P$.
\end{theorem}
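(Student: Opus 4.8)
The plan is to prove the Three-pendant Three-crown Property by contradiction, exploiting the three-layer toolkit described above: lamps, $\E C_1$-diagrams, and the Swing Lemma. First I would assume that there is a cover-preserving embedding $\gf$ of the ordered set of Figure~\ref{F:3P3C} into $\E P = \Ji{\Con K}$ for some SPS lattice $K$, and that all six maximal elements of the configuration map to maximal elements of $\E P$ (this is the natural analogue of the hypotheses in the Four-crown Two-pendant and No Child properties, and almost certainly the way the statement is intended; the ``3P3C'' name indicates three pendant vertices sitting over a six-element crown). Since $\E P$ is determined only up to isomorphism by the congruence structure of $K$, I may replace $K$ by an SR lattice without changing $\E P$: every SPS lattice extends to a slim rectangular one with the same poset of join-irreducible congruences, so all of Section~\ref{S:Natural} and Section~\ref{S:Lamps} applies. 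Fix a $\E C_1$-diagram of this SR lattice via Theorem~\ref{T:well}.

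Next I would translate the order-theoretic data into lamp geometry. Each join-irreducible congruence $\con{\fp}$ of an SR lattice corresponds to a lamp, i.e.\ an internal-swing class of edges with meet-irreducible bottoms; the covering relations in $\E P$ are controlled by how these lamps are positioned in the $\E C_1$-diagram (a lamp $A$ lies below a lamp $B$ precisely when the ``body'' of $A$ is geometrically dominated by $B$ in a way detected by down-perspectivity and swings, per the Swing Lemma \ref{L:SPSproj}). The three ``pendant'' elements, being covered by two distinct maximal elements each, force—by the No Child Property already established—certain pairs of maximal lamps to have no common lower cover; combined with the crown structure, this pins down a rigid arrangement of six maximal lamps and three sub-lamps along the boundary chains $\Chl K$ and $\Chr K$. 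I would then run the standard counting/parity argument on trajectories (à la G.~Cz\'edli \cite{gC14}, G.~Gr\"atzer \cite{gG18}): each maximal lamp contributes a normal edge on the top boundary, each pendant forces its two covering lamps to ``reach down'' to a common steep middle edge of a peek sublattice, and the three crowns impose three such reaching-down constraints that cannot be simultaneously satisfied in a planar diagram.

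The main obstacle, as with all the Cz\'edli-style forbidden-configuration results, will be the combinatorial bookkeeping of the lamp positions: one must show that the hypothetical embedding forces a sub-diagram which is itself a cover-preserving copy of a smaller already-forbidden poset—most plausibly the Four-crown Two-pendant ordered set $\E R$ of Figure~\ref{F:4crown}, or an iterated No-Child violation—and deriving that reduction requires a careful case analysis on which of the three pendants' fibers overlap along the left versus the right boundary chain. Concretely, I expect three or four cases depending on the cyclic arrangement of the six maximal lamps; in each case the steep middle edges demanded by the three peek sublattices would have to be nested or interleaved in a way that contradicts either planarity or the length-monotonicity clause \eqref{E:geq} of the Swing Lemma. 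I would close each case by producing the forbidden sub-configuration and invoking the corresponding theorem of Section~\ref{S:major}, so that no fresh lattice-theoretic construction is needed beyond what is already in the literature cited here; the novelty is entirely in organizing the reduction.

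\begin{figure}[htb]
\centerline{\includegraphics{3P3C}}
\caption{The ordered set for the 3P3C Property}
\label{F:3P3C}
\end{figure}
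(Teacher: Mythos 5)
The first thing to note is that the survey itself offers no proof of this theorem: it is stated in Section~\ref{S:3P3C} with a pointer to G.~Cz\'edli and G.~Gr\"atzer \cite{CG21} (and \cite{gG22}), and the worked proof sketches in Section~\ref{S:proofs} cover only the Partition and Two-pendant Four-crown Properties. So there is no in-paper argument to compare yours against; what I can assess is whether your text would stand on its own, and it would not. It is a research plan rather than a proof: the decisive steps --- the ``counting/parity argument on trajectories,'' the claim that the three peek sublattices' steep middle edges ``cannot be simultaneously satisfied in a planar diagram,'' and the reduction to an already-forbidden configuration --- are all announced (``I would then run,'' ``I expect three or four cases'') but never executed. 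You even leave open which forbidden configuration the reduction targets (the ordered set $\E R$ of Figure~\ref{F:4crown} versus ``an iterated No-Child violation''), which is precisely the content a proof would have to supply. Compare with the Two-pendant Four-crown sketch in Section~\ref{S:proofs}: short as it is, it actually names the edges $P,Q,R,S,U,V$ and derives the contradiction (the edge $V$ lying in two distinct peak sublattices). Nothing of that concreteness appears here.

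There is also a substantive error at the outset. You add the hypothesis that the maximal elements of the configuration map to maximal elements of $\E P$, calling it ``almost certainly the way the statement is intended.'' It is not: the theorem as stated forbids \emph{any} cover-preserving embedding, with no maximality clause, and that is exactly the point of attaching the pendants --- they are what lets one dispense with the maximality hypothesis that the Four-crown Two-pendant Property still needs. Assuming maximality up front therefore proves (at best) a strictly weaker statement, and it also suggests a misreading of the configuration itself (the three-crown has three maximal elements, not six; the pendants hang below them). The reduction from an SPS lattice to an SR lattice with the same ordered set of join-irreducible congruences is legitimate and is indeed how these arguments begin, but everything after that needs to be actually done, and done for the statement as written.
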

 \begin{figure}[htb]
\centerline{\includegraphics{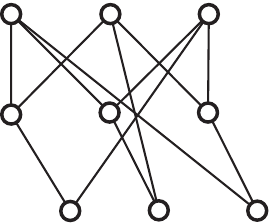}}
\caption{The ordered set for the three-pendant Three-crown Property}
\label{F:3P3C}
\end{figure}

\section{Some proofs}\label{S:proofs}

In this section, I sketch the proofs of the Partition and the
Two-pendant Four-crown Properties of congruences of SR lattices,
based on my paper \cite{gG21}.

\subsection*{The Partition Property}\label{S:partition}\index{Partition Property}

We start with a lemma.

\begin{lemma}\label{L:disjoint}
Let $X $ and $Y$ be distinct edges on the upper-left boundary of~$K$. 
Then there is no edge $Z$ of $K$ such that 
\begin{equation}\label{E:ZXY}
   \col Z \prec \col X, \col Y.
\end{equation}
\end{lemma}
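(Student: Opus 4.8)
The plan is to work in a $\E C_1$-diagram of the SR lattice $K$ (which exists by Theorem~\ref{T:well}, or equivalently by using a natural diagram via Theorem~\ref{T:C1=natural}), and to translate the statement about join-irreducible congruences into geometry of edges. Recall that edges $X$ and $Y$ on the upper-left boundary of $K$ are maximal in a suitable sense: by the Swing Lemma, a congruence $\con{X}$ is generated by an edge whose $1$-element is as high as possible, and boundary edges near the top cannot be reached from below by any sequence of down-perspectivities and swings. So the first step is to show that an upper-left boundary edge $X$ has the property that $\col X$ (the color, i.e. $\con X$) is a maximal element of $\E P$; this uses the description of how congruences spread (trajectories / Swing Lemma / lamps) and the fact that climbing from $X$ is impossible since $1_X$ lies on the boundary with nothing above it in the relevant direction.

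Next I would argue by contradiction: suppose $Z$ is an edge with $\col Z \prec \col X$ and $\col Z \prec \col Y$ in $\E P$. Since $\con X$ and $\con Y$ are maximal and $\con Z$ lies strictly below both, the Swing Lemma gives, starting from $Z$, two essentially different ascending trajectories — one climbing/sliding/swinging up to a representative of $\con X$ on the upper-left boundary, the other up to a representative of $\con Y$ on the upper-left boundary. The key geometric point is that from a single edge $Z$ one cannot reach two distinct upper-left boundary colors as \emph{covers}: the set of colors above $\col Z$ that are reachable is controlled by a single trajectory through $Z$, and on the upper-left boundary consecutive reachable boundary colors are comparable, not both covering $\col Z$. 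Concretely, I would track the top elements $1_{\fr_i}$ of the Swing-Lemma sequence \eqref{E:geq}: both ascending paths, when run backwards from $X$ and from $Y$ down to $Z$, would have to pass through a common edge or a common hinge, and a fork at that hinge forces one of $\col X, \col Y$ to dominate a color strictly between $\col Z$ and itself, contradicting that $Z$ is covered by both. This is where lamps enter cleanly: edges with meet-irreducible bottoms partition into lamps under internal swing, and the colors of the two boundary edges $X$, $Y$ belong to lamps whose "lower" structure cannot simultaneously have $\col Z$ as an immediate predecessor of both.

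The main obstacle I expect is making precise the claim that "a single edge $Z$ reaches at most one upper-left boundary color as an immediate cover." This requires a careful case analysis of the geometry of the $\E C_1$-diagram near the upper-left boundary: one must rule out configurations where $Z$ sits at the bottom of a fork ($\SN 7$-type or larger) whose two sides climb to $X$ and $Y$ respectively. The resolution should come from semimodularity plus slimness (no $\mthree$): a covering square colored twice by the same color would have to be an $\SN 7$ rather than an $\mthree$, and the steep-middle-edge convention of the $\E C_1$-diagram pins down exactly which edge of such a fork carries which color, so that the "child" color $\col Z$ can be a cover of at most one of the two boundary colors. Once that local lemma is in hand, the contradiction with \eqref{E:ZXY} is immediate, and the Partition Property follows by declaring two upper-left boundary edges to be in the same block when their colors share a common lower cover — which by this lemma never happens, giving the required bipartition of the maximal elements.
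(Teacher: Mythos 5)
Your overall framework --- pass to a ${\E C}_1$-diagram, assume $\col Z \prec \col X, \col Y$, and run the Swing Lemma backwards from $X$ and from $Y$ down to $Z$ --- matches the paper's, but the proposal has a genuine gap exactly where you flag your ``main obstacle.'' The claim that a single edge $Z$ cannot have two distinct upper-left boundary colors covering $\col Z$ is the entire content of the lemma, and you offer only heuristic gestures toward it (``the two paths would have to pass through a common edge or a common hinge,'' ``this is where lamps enter cleanly,'' ``the resolution should come from semimodularity plus slimness''). None of these is an argument: you never establish why the two descending sequences must share a hinge at all, nor why sharing one would force an intermediate color strictly between $\col Z$ and one of $\col X$, $\col Y$.

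The paper closes this gap with one precise structural fact that your sketch is missing: for an upper-left boundary edge $X$, the cover relation $\col Z \prec \col X$ is witnessed by a normal-up edge $S_X$ and a steep edge $T_X$ with $X \perspdn S_X \exswing T_X$ and $Z \in \traj{T_X}$ --- the descent from the color of $X$ to a covered color happens through an external swing onto the steep middle edge of a peak $\SfS 7$, and everything below is confined to the trajectory of that steep edge. Since trajectories partition the edge set, $Z \in \traj{T_X} \ii \traj{T_Y}$ forces $T_X = T_Y$; the steep edge determines the unique normal-up edge externally swinging onto it, so $S_X = S_Y$; and down-perspectivity from the upper-left boundary is then unique, so $X = Y$, the desired contradiction. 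Without this witness (or an equivalent lamp-theoretic statement) your proposed local case analysis near the boundary has nothing to anchor it, and the assertion that the ``child'' color can cover-relate to at most one boundary color does not follow from anything you have established.
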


\begin{proof}
By way of contradiction, let $Z$ be an edge such that \eqref{E:ZXY} holds.
Since~$X$ and~$Y$ are on the upper-left boundary, 
there exist normal-up edges $S_X, S_Y$ and steep edges $T_X, T_Y$ such that 
   \[
      X \perspdn S_X \exswing T_X,\q  Y \perspdn S_Y \exswing T_Y,\q  
      Z \in \traj {T_X} \ii \traj {T_Y}.
   \] 
The third formula implies that $T_X = T_Y$. 
Since  $S_X$ and $S_Y$ are normal-up,
it follows that $X = Y$, contradicting the assumption that $X $ and $Y$ are distinct. 
\end{proof}

The set of maximal elements of $\E P$ is the same 
as the set of~colors of edges in the upper boundaries.
The latter we can partition into the set of edges~$\E L$ in~the upper-left boundary
and the set of edges $\E R$ in the upper-right boundary.
Let~$X $ and $Y$ be distinct edges in $\E L$. 
By Lemma~\ref{L:disjoint}, 
there is no edge $Z$ of $K$ such that $\col Z \prec \col X, \col Y$. 
By symmetry, this verifies the Partition Property.\index{Partition Property}

\subsection*{The Two-pendant Four-crown Property}\label{S:Crown}\index{Two-pendant Four-crown Property}

By way of contradiction, 
assume that the ordered set $\E R$ of Figure~\ref{F:4crown}
is a~cover-preserving ordered subset of $\E P$,
where $a,b,c,d$ are maximal elements of~$\E P$. 
So there are edges $A,B,C,D$ on the upper boundary of~$K$, 
so that  $\col A = a$, $\col B = b$, $\col C=c$, $\col D = d$.
By left-right symmetry, we can assume that the edge $A$ 
is on the upper-left boundary of~$K$. Since $p \prec a, b$ in~$\E P$,
it follows from Lemma~\ref{L:disjoint} that the edge $B$
is on the upper-right boundary of $K$, and so is $D$.
Similarly, $C$ is on the upper-left boundary of $K$. 

Because of the automorphisms of the ordered set $\E R$,
it is sufficient to deal with one case only:
$C$ is below $A$ and $B$ is below $D$.

\begin{figure}[htb]
\centerline{\includegraphics[scale=1.2]{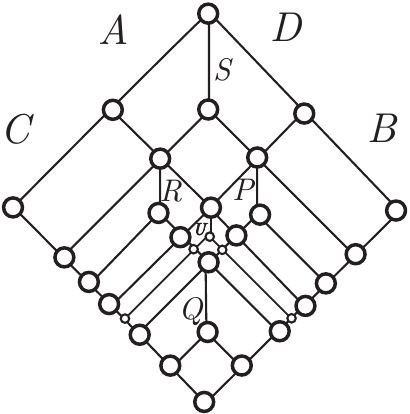}}
\caption{Illustrating the proof of the Two-pendant Four-crown Property}
\label{F:CABDx}
\end{figure}\index{Two-pendant Four-crown Property}

So there is a peak sublattice $\SfS 7$ with middle edge $P$ 
(as in Figure~\ref{F:CABDx})
so that $A$ and $B$ are down-perspective 
to the upper-left edge and the upper-right edge of this peak sublattice, respectively. 
We define, similarly, the edge $Q$ for $C$ and $B$,
the edge $S$ for $A$ and~$D$, the edge $R$ for $C$ and~$D$,
and the edge $U$ for $R$ and~$P$.
Finally, $v \prec q, s$ in $\E R$, 
therefore, there is a peak sublattice~$\SfS 7$ with middle edge~$V$
with upper-left edge $V_l$ and the upper-right edge~$V_r$
so that  $S \perspdn V_l$ and $Q \perspdn V_r$, or symmetrically.
Then $V$ is in both the peak sublattices containing $\SfS 7$, $S$, and $Q$,
respectively, a contradiction.
This concludes the proof of the Two-pendant Four-crown Property.

Of course, the diagram in Figure~\ref{F:CABDx} is only an illustration.
The grid could be much larger, the edges $A, C$ and $B, D$ may not be adjacent, 
and there maybe lots of other elements in $K$.

\section{One more problem}\label{S:more}

Let $D$ be a finite distributive lattice of the form $D^h \dotplus \SB 2$,
the glued sum of a finite distributive lattice $D^h$ with $\SB 2$.
One can think of $D^h$ as $D$ \emph{with the hat removed}.

Recall that a rectangular lattice $L$ is a \emph{patch lattice}, 
if the corners are dual atoms.
It is easy to verify that a rectangular lattice $L$ is a \emph{patch lattice}
if{}f $\Con L$ is a glued sum of a finite distributive lattice $\Con^h L$ with $\SB 2$.

\begin{named}{Patch Problem} 
Characterize the congruence lattice of a patch lattice $L$ 
in terms of the properties of the finite distributive lattice $\Con^h L$.
\end{named}


\begin{thebibliography}{99}

\bibitem{AC14}
K. Adaricheva and G. Cz\'edli,
Note on the description of join-distributive lattices by permutations.
Algebra Universalis \tbf{72} (2014), 155-162. 

\bibitem{gB37}
G. Birkhoff,
Rings of sets.
Duke Math. J. \tbf{3}, 443--454.

\bibitem{BFK90}
K.\,P. Bogart, R. Freese, and J.\,P.\,S. Kung (editors), The Dilworth Theorems. Selected papers of Robert P. Dilworth,
Birkh\"auser Boston, Inc., Boston, MA, 1990. xxvi+465 pp.

\bibitem{gC12}
G. Cz\'edli,
The matrix of a slim semimodular lattice.
Order \tbf{29} (2012), 85--103. 

\bibitem{gC12a}
G. Cz\'edli,
Representing homomorphisms of distributive lattices
as restrictions of congruences of rectangular lattices.
Algebra Universalis \tbf{67} (2012), 313--345. 

\bibitem{gC14}
G. Cz\'edli,
Patch extensions and trajectory colorings of slim rectangular lattices.
Algebra Universalis \tbf{72} (2014), 125--154. 

\bibitem{gC14a}
G. Cz\'edli,  A note on congruence lattices of slim semimodular lattices.
Algebra Universalis \tbf{72} (2014), 225--230. 

\bibitem{gC14b}
G. Cz\'edli,
Coordinatization of finite join-distributive lattices.
Algebra Universalis \tbf{71} (2014), 385--404. 

\bibitem{gC14c}
G. Cz\'edli,
Finite convex geometries of circles.
Discrete Math. \tbf{330}  (2014), 61--75.
 
\bibitem{cG16}
G. Cz\'edli,  The asymptotic number of planar, slim, semimodular lattice diagrams.
Order \tbf{33}  (2016), 231--237.

\bibitem{gC16b}
G. Cz\'edli,  Quasiplanar diagrams and slim semimodular lattices.
Order \tbf{33} (2016), 239--262.

\bibitem{gC17} 
G. Cz\'edli,
Diagrams and rectangular extensions of planar semimodular lattices.
Algebra Universalis \tbf{77} (2017), 443--498.

\bibitem{gC21}
G. Cz\'edli,
Lamps in slim rectangular planar semimodular lattices.
Acta Sci. Math. (Szeged), \tbf{87} (2021), 381--413.

\bibitem{gCb1}
G. Cz\'edli,
Cyclic congruences of slim semimodular lattices and
non-finite axiomatizability of some finite structures.
 Arch. Math. (Brno) \tbf{58} (2022), 15--33.

\bibitem{gCx}
G. Cz\'edli,
Slim patch lattices as absolute retracts and maximal lattices.\\
\texttt{arXiv:2105.12868}

\bibitem{gCzz}
G. Cz\'edli,
Revisiting Faigle geometries from a perspective of semimodular lattices. \\
\texttt{arXiv:2107.10202}

\bibitem{gCcc}
G. Cz\'edli,
A property of meets in slim semimodular lattices and its application to retracts. 
Acta Sci. Math. (Szeged).\\
\texttt{arXiv:2112.07594}

\bibitem{gCce}
G. Cz\'edli,
Infinitely many new properties of the congruence lattices of slim semimodular lattices.
Acta Sci. Math. (Szeged).\\
\texttt{arXiv:2206.14769}

\bibitem{CDGK16}
G. Cz\'edli, T. D\'ek\'any, G., Gyenizse, and J. Kulin,
The number of slim rectangular lattices.
Algebra Universalis \tbf{75} (2016), 33--50.

\bibitem{CDOSUa}
G. Cz\'edli, T. D\'ek\'any, L. Ozsv\'art, N. Szak\'acs, B. Udvari,
On the number of slim semimodular lattices.
Math. Slovaca \tbf{66} (2016), 5--18.

\bibitem{CG12}
G. Cz\'edli and G. Gr\"atzer,
Notes on planar semimodular lattices. VII. Resections of planar semimodular lattices. Order \tbf{30} (2013), 847--858.

\bibitem{CG14}
G. Cz\'edli and G. Gr\"atzer,
Planar Semimodular Lattices: Structure and Diagrams.
Chapter~3 in \cite{LTS1}.

\bibitem{CG21}
G. Cz\'edli and G. Gr\"atzer, 
A new property of congruence lattices of slim, planar, semimodular lattices.
Categ. Gen. Algebr. Struct. Appl. \tbf{16} (2021), 1--28.

\bibitem{CGL18}
G. Cz\'edli, G. Gr\"atzer, and H. Lakser,
Congruence structure of planar semimodular lattices: the General Swing Lemma.
Algebra Universalis \tbf{79} (2018).

\bibitem{CK19} 
G. Cz\'edli and \'A. Kurusa, 
A convex combinatorial property of compact sets in the plane and its roots in lattice theory. 
Categories and General Algebraic Structures with Applications \tbf{11} (2019), 57--92.

\bibitem{CM17} 
G. Cz\'edli and G. Makay,
Swing lattice game and a direct proof of the Swing Lemma
for planar semimodular lattices.
Acta Sci. Math. (Szeged) \tbf{83} (2017), 13--29.

\bibitem{CM21}
G. Cz\'edli and A. Molkhasi,
Absolute retracts for finite distributive lattices and slim semimodular lattices.
Order.\\
\texttt{arXiv:2105.10604}

\bibitem{COU12}
G. Cz\'edli, L. Ozsv\'art, and B. Udvari,
How many ways can two composition series intersect?
Discrete Math. \tbf{312} (2012), 3523--3536.

\bibitem{CS11}
G. Cz\'edli and E.\,T. Schmidt,
The Jordan-H\"older theorem with uniqueness for groups and semimodular lattices. 
Algebra Universalis \tbf{66} (2011), 69--79.

\bibitem{CS12}
G. Cz\'edli and E.\,T. Schmidt,
Slim semimodular lattices. I. A visual approach.
Order \tbf{29} (2012), 481--497.

\bibitem{CS13}
G. Cz\'edli and E.\,T. Schmidt,
Slim semimodular lattices. II. A description by patchwork systems.
Order \tbf{30} (2013), 689--721.

\bibitem{CS13a}
G. Cz\'edli and E.\,T. Schmidt,
Composition series in groups and the structure of slim semimodular lattices.
Acta Sci. Math. (Szeged) \tbf{79} (2013), 369--390.

\bibitem{DGK16}
T. D\'ek\'any, G. Gyenizse, J. Kulin,
Permutations assigned to slim rectangular lattices.
Acta Sci. Math.  (Szeged) \tbf{82} (2016), 19--28.

\bibitem{FN42}
N.~Funayama and T.~Nakayama,
On the congruence relations on lattices.
Proc.\ Imp.\ Acad.\ Tokyo \tbf{18} (1942), 530--531.

\bibitem{GLT}
G. Gr\"atzer, 
General Lattice Theory. 
Pure and Applied Mathematics \tbf{75},
Academic Press, Inc.\ (Harcourt Brace Jovanovich, Publishers), New
York-London; Lehrb\"ucher und Monographien aus dem Gebiete der Exakten 
Wissenschaften, Mathematische Reihe, Band 52. Birkh\"auser Verlag,
Basel-Stuttgart; Akademie Verlag, Berlin, 1978. xiii+381 pp.

\bibitem{gG80}
G. Gr\"atzer, 
General Lattice Theory: 1979 Problem Update.
 Algebra Universalis \tbf{11} (1980), 396--402.

\bibitem{GLT2}
G. Gr\"atzer,
General Lattice Theory, second edition.
New appendices by the author with B.\,A. Davey, R. Freese, B. Ganter, M. Greferath, P. Jipsen, 
H.\,A. Priestley, H. Rose, E.\,T.~Schmidt, S.\,E. Schmidt, F. Wehrung, and R. Wille. 
Birkh\"auser Verlag, Basel, 1998. xx+663 pp.\\
Softcover edition, Birkh\"auser Verlag, Basel--Boston--Berlin, 2003. Reprinted July, 2007.

\bibitem{LTF} 
G. Gr\"atzer,
Lattice Theory: Foundation. 
Birkh\"auser Verlag, Basel, 2011. xxix+613 pp.

\bibitem{gG13}
G. Gr\"atzer,
Notes on planar semimodular lattices. VI.
On the structure theorem of planar semimodular lattices.
Algebra Universalis \tbf{69} (2013), 301--304.

\bibitem{gG14a}
G. Gr\"atzer,
Planar Semimodular Lattices: Congruences.
Chapter 4 in \cite{LTS1}.

\bibitem{gG15c}
G. Gr\"atzer,
On a result of G\'abor Cz\'edli concerning congruence lattices of planar semimodular lattices.
Acta Sci. Math. (Szeged) \tbf{81} (2015), 25--32.

\bibitem{gG15b}
G. Gr\"atzer,
Congruences in slim, planar, semimodular lattices: The Swing Lemma.
Acta Sci. Math. (Szeged) \tbf{81} (2015), 381--397.

\bibitem{gG17a}
G. Gr\"atzer,
Congruences of fork extensions of slim, planar, semimodular lattices.
Algebra Universalis \tbf{76} (2016), 139--154.

\bibitem{gG18}
G. Gr\"atzer,
Congruences and trajectories in planar semimodular lattices.
Discuss. Math. Gen. Algebra Appl. \tbf{38} (2018), 131--142.

\bibitem{gG20}
G. Gr\"atzer,
Notes on planar semimodular lattices.
VIII. Congruence lattices of SPS lattices.
Algebra Universalis \tbf{81} (2020), no. 2, Paper No.~15, 3 pp.

\bibitem{gG20a}
G. Gr\"atzer,
Applying the Cz\'edli-Schmidt sequences to congruence properties 
of planar semimodular lattices.
Discuss. Math. Gen. Algebra Appl. \tbf{41} (2021), 153--169.

\bibitem{gG21a}
G. Gr\"atzer,
Notes on planar semimodular lattice. IX. $\mathcal C_1$-diagrams. 
Discuss. Math. Gen. Algebra Appl.\\
\texttt{arXiv:2104.02534}

\bibitem{gG21}
G. Gr\"atzer,
Applying the Swing Lemma and $\mathcal C_1$-diagrams
to congruences of planar semimodular lattices.\\
\texttt{arXiv:2106.03241}

\bibitem{gG22}
G. Gr\"atzer, 
On slim rectangular lattices.\\
\texttt{arXiv:2201.13343}

\bibitem{gG22a}
G. Gr\"atzer, 
On a property of congruence lattices of slim, planar, semimodular lattices.\\
\texttt{arXiv:2205.10922}

\bibitem{CFL3}
G. Gr\"atzer,
The Congruences of a Finite Lattice, A \emph{Proof-by-Picture} Approach,
third edition.
Birkh\"auser, 2023.\\
Download Part I:\\
\texttt{arXiv:2104.0653}


\bibitem{GKn07}
G. Gr\"atzer and E. Knapp,
Notes on planar semimodular lattices. I. Construction.
Acta Sci. Math. (Szeged) \tbf{73} (2007), 445--462.

\bibitem{GKn08}
G. Gr\"atzer and E. Knapp,
A note on planar semimodular lattices.
Algebra Universalis \tbf{58} (2008), 497--499.

\bibitem{GKn08a}
G. Gr\"atzer and E. Knapp,
Notes on planar semimodular lattices. II. Congruences.
Acta Sci. Math. (Szeged) \tbf{74} (2008), 37--47.

\bibitem{GKn09}
G. Gr\"atzer and E. Knapp,
Notes on planar semimodular lattices. III.
Congruences of rectangular lattices.
Acta Sci. Math. (Szeged) \tbf{75} (2009), 29--48.

\bibitem{GKn10}
G. Gr\"atzer and E. Knapp,
Notes on planar semimodular lattices. IV.
The size of a minimal congruence lattice representation with rectangular lattices.
Acta Sci. Math. (Szeged)  \tbf{76} (2010), 3--26.

\bibitem{GL22}
G. Gr\"atzer and H.~Lakser,
Homomorphisms of distributive lattices as restrictions of congruences. III.
Rectangular lattices and two convex sublattices.
Acta Sci. Math. (Szeged).\\
\texttt{arXiv:2112.15248}

\bibitem{GLS98a}
G. Gr\"atzer, H. Lakser, and E.\,T. Schmidt, 
{Congruence lattices of finite semimodular lattices.} 
Canad.\ Math.\ Bull. \tbf{41} (1998), 290--297.

\bibitem{GS62}
G. Gr\"atzer and E.\,T. Schmidt, 
{On congruence lattices of lattices.} 
Acta Math. Acad. Sci. Hungar. \tbf{13} (1962), 179--185. 

\bibitem{GS14a}
G. Gr\"atzer and  E.\,T. Schmidt,
A short proof of the congruence representation theorem of rectangular lattices.
Algebra Universalis \tbf{71} (2014), 65--68.

\bibitem{GW10}
G. Gr\"atzer and T. Wares,
Notes on planar semimodular lattices. V. Cover-preserving embeddings of finite semimodular lattices into simple semimodular lattices.
Acta Sci. Math. (Szeged) \tbf{76} (2010), 27--33.

\bibitem{LTS1} 
G. Gr\"atzer and F. Wehrung eds.,
Lattice Theory: Special Topics and Applications. Volume 1.
Birkh\"auser Verlag, Basel, 2014.\\
ISBN: 978-3-319-06412-3 ISBN: 978-3-319-06413-0 (eBook)

\end{thebibliography}
\end{document}